\journal{Journal} 
\newtheorem{theorem}{Theorem}[section]
\newtheorem{lemma}[theorem]{Lemma}
\newtheorem{proposition}[theorem]{Proposition}
\newenvironment{proof}{\par \vspace{0.3cm} \noindent{\sc Proof:} \ignorespaces}%
{\nolinebreak\hfill $\square$\par \medskip}
\newcommand{\dist}{\mathop{\mathrm{dist}}\limits}
\newcommand{\bnull}{{\bf 0}}
\newcommand{\bff}{{\bf f}}
\newcommand{\bw}{{\bf w}}
\newcommand{\bq}{{\bf q}}
\newcommand{\bc}{{\bf c}}
\newcommand{\be}{{\bf e}}
\newcommand{\bu}{{\bf u}}
\newcommand{\ba}{{\bf a}}
\newcommand{\bp}{{\bf p}}
\newcommand{\br}{{\bf r}}
\newcommand{\bz}{{z}}
\newcommand{\bx}{{\bf x}}
\newcommand{\tpsi}{{\boldsymbol \phi}}
\newcommand{\tA}{\widetilde{A}}
\newcommand{\by}{{\bf y}}
\newcommand{\bv}{{\bf v}}
\newcommand{\balpha}{{\boldsymbol \alpha}}
\newcommand{\bbeta}{{\boldsymbol \beta}}
\newcommand{\bphi}{{\boldsymbol \phi}}
\newcommand{\bpsi}{{\boldsymbol \psi}}
\newcommand{\spann}{\operatorname{span}}
\def\S{ \mathbb {S}}
\begin{document}
	
	\begin{frontmatter}
		
		\title{On a matrix-valued PDE characterizing \\ a contraction metric 
		for a periodic orbit}
%
		\author{Peter Giesl}
			\ead{p.a.giesl@sussex.ac.uk}
		\address{Department of Mathematics,	University of Sussex,
			Falmer BN1 9QH, UK}

		\begin{abstract}The stability and the basin of attraction of a periodic orbit can be determined using a contraction metric, i.e., a Riemannian metric with respect to which adjacent solutions contract. A contraction metric does not require knowledge of the position of the periodic orbit and is robust to perturbations.
			
			In this paper we characterize such a Riemannian contraction metric as matrix-valued solution of a linear first-order Partial Differential Equation. This will enable the explicit construction of a contraction metric by numerically solving this equation in future work. In this paper we prove existence and uniqueness of the solution of the PDE and show that it defines a contraction metric. 
		\end{abstract}
		
		\begin{keyword}  Periodic orbit\sep stability \sep contraction metric \sep matrix-valued Partial Differential Equation \sep Existence \sep Uniqueness
			\MSC[2010] 34C25\sep  34D20 \sep 37C27
		\end{keyword}
		
	\end{frontmatter}
	

\section{Introduction}

Ordinary differential equations arise in many important applications and the determination of periodic orbits, their stability and basins of attraction are important tasks. We consider a general autonomous Ordinary Differential Equation (ODE) of the form
$$\dot{\bx}=\bff(\bx),$$
where $\bff\colon \mathbb R^n\to \mathbb R^n$ is sufficiently smooth.

The stability and the basin of attraction of a periodic orbit can be determined using a Lyapunov function, however, its definition requires the exact position of the periodic orbit. A contraction metric can show the existence, uniqueness and stability of a periodic orbit without knowledge of its position. Moreover, a contraction metric is robust to small perturbations of the system or the metric, which ensures that even a good approximation to a contraction metric, e.g. using numerical methods, is itself a  contraction metric.

A contraction metric is a Riemannian metric such that the distance between adjacent trajectories decreases over time with respect to the Riemannian metric. Such solutions are also called incrementally stable and a contraction metric is a special type of a Finsler-Lyapunov function \cite{forni}. The contraction condition can be formulated as a local condition in a point $\bx\in\mathbb R^n$ and all adjacent solutions through $\bx+\bv$ for small $\bv\in\mathbb R^n$. If the contraction condition holds for all points $\bx$ in a compact, positively invariant and connected set $K$, then there exists one and only one attractor in $K$, it is exponentially stable and $K$ is a subset of its basin of attraction.
If the contraction holds for all adjacent directions $\bv$, then the attractor is an equilibrium. If the contraction only holds for $\bv$ perpendicular to  $\bff(\bx)$ and $K$ does not contain any equilibrium, then the attractor is a periodic orbit, see Theorem \ref{1.1}.

 Contraction metrics for periodic orbits have been studied by Borg \cite{borg} with the Euclidean metric and Stenstr\"om \cite{stenstroem} with a general Riemannian metric.
 Further results using a contraction metric have been obtained in   \cite{hartman,hartmanbook,leonov3,leonov96}.

Converse theorems, showing the existence of a contraction metric defined in the basin of attraction of an exponentially stable periodic orbit, have been obtained in \cite{giesl04}.
\cite[Section 3.5]{lohmiller} gave a converse theorem, but the Riemannian metric $M(t,\bx)$ depends on $t$ and, in general, can become  unbounded as $t\to\infty$.
In \cite[Theorem 3]{manchester}, the authors have expressed a transverse contraction condition, i.e. a contraction metric for periodic orbits, using Linear Matrix Inequalities and have used SOS (sum of squares) to construct it.


In the case of contraction metrics for an equilibrium, converse theorems have been established in \cite{converse}, characterizing the contraction metric as solution of a matrix-valued PDE. Hence, an approximate solution to the PDE, e.g. using numerical methods \cite{giesl-wendland}, constructs a contraction metric.

In this paper we seek to establish a similar result for contraction metrics for periodic orbits. 
The non-trivial challenge is to restrict the space of adjacent solutions in direction $\bv$ to an $(n-1)$-dimensional hyperplane by using a projection onto it. 
%

Let us give an outline of the contents: in Section \ref{suf} we define a contraction metric, show that it provides a sufficient condition for the existence, uniqueness and exponential stability of a periodic orbit and determines its basin of attraction. Furthermore, we show that the solution of a matrix-valued PDE defines such a contraction metric.
In Section \ref{ex} we prove the existence of a solution of the above matrix-valued PDE and in Section \ref{uni} we prove its uniqueness.  We conclude in Section \ref{con}.

\section{Sufficiency}
\label{suf}

Let us consider the ODE
\begin{eqnarray}
	\dot{\bx}&=&\bff(\bx)\label{ODE}
	\end{eqnarray}
	where $\bff\in C^\sigma(\mathbb R^n,\mathbb R^n)$, $n\in \mathbb N$ and $\sigma\ge 1$.
	A Riemannian metric is a matrix-valued function $M\in C^1(D,\mathbb S^n)$, where $D\subset \mathbb R^n $ is a domain and $\mathbb S^n$ denotes the symmetric $\mathbb R^{n\times n}$ matrices, such that $M(\bx)$ is positive definite for all $\bx\in D$. In particular, $\langle \bv,\bw\rangle_\bx=\bv^TM(\bx)\bw$ defines a point-dependent scalar product for all $\bx\in D$ and $\bv,\bw\in \mathbb R^n$.

In this section we show that the solution of a certain matrix-valued PDE is a contraction metric and gives information about the existence and uniqueness of a periodic orbit as well as its basin of attraction. 
There are different versions of the contraction condition in the literature; the one we present synchronizes the time between adjacent trajectories such that the difference vector $\bv$ is perpendicular on $ \bff(\bx)$, while the distance is measured with respect to the Riemannian metric $M$, i.e. $\bv^T M(\bx)\bv$. It can be generalized to synchronization perpendicular to $\bq(\bx)$, where $\bq(\bx)$ is not perpendicular to $\bff(\bx)$, however, the operator $L_M$ will take a different form, see \cite{leonov3}.

Other conditions  synchronize the time between adjacent trajectories such that the difference vector $\bv$ satisfies $\bv^T M(\bx) \bff(\bx)=0$, i.e. $\bv$ is perpendicular to $\bff(\bx)$  with respect to the metric $M$. This condition is less suitable for computations, as the unknown metric $M$ also appears in the condition for $\bv$.
The vector norm in the following theorem and the rest of the paper is the Euclidean norm $\|\cdot\|=\|\cdot\|_2$.

\begin{theorem}\label{1.1}
Let $K\subset \mathbb R^n$ be a compact, connected and positively invariant set which does not contain any equilibrium. Let $M\in C^1(K,\mathbb S^n)$ be a Riemannian metric and let  $L_M(\bx)\le -\nu<0$ for all $\bx\in K$ where
\begin{eqnarray*}
	L_M(\bx)&=&\max_{\bv\in\mathbb R^n, \bv^TM(\bx)\bv=1,\bv^T\bff(\bx)=0}L_M(\bx;\bv)\\
L_M(\bx;\bv)&=&\frac{1}{2}\bv^T\bigg(M'(\bx)+D\bff(\bx)^TM(\bx)+M(\bx)D\bff(\bx)\\
&&-\frac{M(\bx)\bff(\bx)\bff(\bx)^T(D\bff(\bx)+D\bff(\bx)^T)}{\|\bff(\bx)\|^2}\\
&&-\frac{(D\bff(\bx)+D\bff(\bx)^T)\bff(\bx)\bff(\bx)^TM(\bx)}{\|\bff(\bx)\|^2}\bigg)\bv,
\end{eqnarray*}		
and $(M'(\bx))_{i,j=1,\ldots,n}=(\nabla M_{ij}(\bx))^T\bff(\bx)$ is the matrix of the orbital derivatives of $M_{ij}$ along solutions of \eqref{ODE}.
	
Then there is one and only one periodic orbit $\Omega\subset K$, it is exponentially stable and the real part of all Floquet exponents apart from the trivial one is $\le -\nu$. Moreover, $K\subset A(\Omega)$ and $M$ is called a contraction metric.
\end{theorem}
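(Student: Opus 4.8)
The plan is to realize the quantity $L_M(\bx;\bv)$ as the orbital derivative of $\bv^T M(\bx)\bv$ along a suitably time-reparametrized variational flow, and then to run a Lyapunov-type argument on the function $V(\bx,\bv) = \bv^T M(\bx)\bv$ restricted to the bundle of directions $\bv$ perpendicular to $\bff(\bx)$. First I would set up the variational equation: along a solution $\bx(t)$ of \eqref{ODE}, a nearby solution differs to first order by $\bu(t)$ solving $\dot{\bu} = D\bff(\bx(t))\bu$. The difference vector $\bu$ does not in general stay perpendicular to $\bff(\bx)$, so I would introduce the reparametrized/projected displacement $\bv$ obtained by synchronizing the time along the two trajectories; concretely $\bv$ is $\bu$ projected orthogonally onto $\bff(\bx)^\perp$, and one checks that this $\bv$ satisfies $\dot{\bv} = \big(I - \tfrac{\bff\bff^T}{\|\bff\|^2}\big)D\bff(\bx)\,\bv$ plus a term that vanishes on $\bff^\perp$ — the precise computation using $\tfrac{d}{dt}\bff(\bx(t)) = D\bff\,\bff$ is exactly what produces the two ``correction'' terms $-\tfrac{M\bff\bff^T(D\bff+D\bff^T)}{\|\bff\|^2}$ and its transpose in $L_M(\bx;\bv)$.

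With that in hand, differentiating $W(t) := \bv(t)^T M(\bx(t))\bv(t)$ along the flow gives $\dot W = \bv^T\big(M'(\bx) + D\bff^T M + M D\bff - (\text{correction terms})\big)\bv = 2 L_M(\bx;\bv) \|\cdot\| \le -2\nu W$ after normalizing, using $L_M(\bx) \le -\nu$ and the definition of the max over the normalized perpendicular directions. Hence the Riemannian length of any transverse displacement contracts at exponential rate $\nu$. Next I would extract the dynamical conclusions. Since $K$ is compact, positively invariant and equilibrium-free, $\bff$ is bounded away from $0$ on $K$, so the flow is well-defined for all $t\ge 0$ and $M$ is uniformly positive definite and bounded; the transverse contraction estimate then shows that the time-$T$ map (for any trajectory) is a contraction in the transverse directions. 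I would invoke a Borg–Stenström type argument (the references \cite{borg,stenstroem}, together with \cite{giesl04}): the $\omega$-limit set of any point in $K$ lies in $K$, is nonempty and compact, and the transverse contraction forces it to be a single periodic orbit $\Omega$; uniqueness follows because two distinct periodic orbits (or a periodic orbit and any other recurrent set) would give two trajectories whose transverse distance cannot go to zero, contradicting the estimate. Exponential stability and the Floquet exponent bound $\le -\nu$ come from applying the contraction estimate to the linearized Poincaré map around $\Omega$: its eigenvalues are the nontrivial Floquet multipliers, and $W$ decays like $e^{-2\nu t}$ bounds their moduli by $e^{-\nu \cdot \text{(period)}}$. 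Finally $K \subset A(\Omega)$ because every point of $K$ has its forward orbit trapped in $K$ and converging to $\Omega$.

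The main obstacle I expect is the reparametrization bookkeeping in the first step: carefully defining the synchronized displacement $\bv$, verifying it remains in $\bff(\bx)^\perp$, and checking that its evolution equation is precisely the one whose associated Lyapunov derivative is the displayed $L_M(\bx;\bv)$ — in particular tracking why only the symmetrized correction $-\tfrac{M\bff\bff^T(D\bff+D\bff^T)}{\|\bff\|^2} - \tfrac{(D\bff+D\bff^T)\bff\bff^T M}{\|\bff\|^2}$ appears rather than the naive $-\tfrac{M\bff\bff^T D\bff}{\|\bff\|^2}$ type term. A secondary technical point is making the passage from ``transverse contraction of the variational flow'' to ``the $\omega$-limit set is a single periodic orbit'' fully rigorous: this requires ruling out that the $\omega$-limit set is an equilibrium (excluded by hypothesis on $K$) or a more complicated invariant set, which is where one genuinely needs the Borg-type topological argument rather than just the differential inequality. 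The remaining items — uniform bounds on $K$, global forward existence, the Floquet exponent estimate, and $K\subset A(\Omega)$ — are then routine consequences.
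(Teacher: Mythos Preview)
Your proposal is correct and follows essentially the same approach as the paper's proof in Appendix~B, which modifies \cite[Theorem~5]{giesl04} to the present perpendicularity condition $\bv^T\bff(\bx)=0$: synchronize a nearby trajectory so that the displacement lies in $\bff(S_\theta p)^\perp$, show the squared $M$-length of that displacement satisfies a differential inequality with rate $-2\nu$, and then defer to the Borg--Stenstr\"om argument of \cite{giesl04}. The one difference in emphasis is that the paper carries out the contraction estimate directly at the nonlinear level---for the actual displacement $S_{T(\theta)}(p+\eta)-S_\theta p$ between two synchronized trajectories, with higher-order error terms in $A(\theta)$---rather than for the linearized/projected variational flow you propose; your linearized computation is exactly the leading-order part of theirs, and the finite-displacement version is precisely what is needed to feed into the Borg-type argument you identify as the secondary technical point.
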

For a sketch of the proof see   \ref{proof}.
	
	We intend to determine a matrix-valued function $M$ as above through a matrix-valued PDE.
	For $M\in C^1(\mathbb R^n,\mathbb S^n)$ and $\bx\in \mathbb R^n$ with $\bff(\bx)\not=\bnull$ define the first-order linear differential operator
	\begin{eqnarray}
		LM(\bx)&:=&	M'(\bx)+D\bff(\bx)^TM(\bx)+M(\bx)D\bff(\bx)\nonumber\\
		&&-\frac{M(\bx)\bff(\bx)\bff(\bx)^T(D\bff(\bx)+D\bff(\bx)^T)}{\|\bff(\bx)\|^2}\nonumber
		\\
		&&-\frac{(D\bff(\bx)+D\bff(\bx)^T)\bff(\bx)\bff(\bx)^TM(\bx)}{\|\bff(\bx)\|^2}.\label{LM}
	\end{eqnarray}
	For all $\bx\in \mathbb R^n$ with $\bff(\bx)\not=\bnull$ we also 
	define 
	\begin{eqnarray}
		P_\bx&:=&I-\frac{\bff(\bx) \bff(\bx)^T}{\|\bff(\bx)\|^2}.\label{P}
	\end{eqnarray}
	
	It is easy to see that $P_\bx $ is a  projection 
	onto the hyperplane perpendicular to $\bff(\bx)$, i.e.
	$P_\bx \bff(\bx)=\bnull$ 
	and $P_\bx P_\bx=P_\bx$. Moreover, we have $P_\bx \bv=\bv$ for all $\bv\in\mathbb R^n$  with $\bff(\bx)^T\bv=0$.

In the next proposition we will show that the solution of the matrix-valued PDE \eqref{PDE0}  is a contraction metric in the sense of Theorem \ref{1.1}. 
In Theorem \ref{main} we will show that if $M$ also satisfies an extra condition at one point \eqref{initial}, then we can conclude the positive definiteness of $M(\bx)$ for all $\bx\in A(\Omega)$.

\begin{proposition}
	Let $K\subset \mathbb R^n$ be a compact set which does not contain any equilibrium. 
	Let  $B\in C^{0}(K,\mathbb S^n)$ and $M\in C^{1}(K,\mathbb S^n)$ be such that both $B(\bx)$ and $M(\bx)$ are positive definite for each $\bx\in K$. Let $M$ satisfy 
  	\begin{eqnarray}
  		LM(\bx)&=&-P_\bx^TB(\bx)P_\bx\label{PDE0}
		\end{eqnarray}
		for all $\bx\in K$.
		
		Then there is are $\Lambda,\lambda>0$ such that $\bv^TB(\bx)\bv\ge \lambda \|\bv\|^2$ and $\bv^TM(\bx)\bv\le \Lambda \|\bv\|^2$ hold for all $\bx\in K$  and all $\bv\in \mathbb R^n$. Moreover, 
$$L_M(\bx)\le -\frac{\lambda}{2\Lambda}=:-\nu<0.$$
	\end{proposition}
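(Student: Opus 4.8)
The plan is to treat the two assertions in turn: the two-sided eigenvalue bounds follow from a routine compactness argument, and the contraction estimate then follows by inserting the PDE \eqref{PDE0} into the formula for $L_M$ from Theorem \ref{1.1}.

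First I would record that, since $K$ contains no equilibrium, $\bff(\bx)\neq\bnull$ and hence $P_\bx$ and $LM(\bx)$ are well-defined throughout $K$. The smallest eigenvalue $\lambda_{\min}(B(\bx))$ depends continuously on $\bx\in K$ (eigenvalues are continuous in the matrix entries and $B\in C^0(K,\mathbb S^n)$) and is strictly positive at every point by positive definiteness; as $K$ is compact it attains a positive minimum, which I call $\lambda>0$, giving $\bv^TB(\bx)\bv\ge\lambda\|\bv\|^2$ for all $\bx\in K$ and $\bv\in\mathbb R^n$. In the same way the largest eigenvalue $\lambda_{\max}(M(\bx))$ is continuous on $K$, hence bounded by some $\Lambda>0$, which yields $\bv^TM(\bx)\bv\le\Lambda\|\bv\|^2$.

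For the contraction estimate, I would observe that the symmetric matrix inside the large parentheses in the definition of $L_M(\bx;\bv)$ in Theorem \ref{1.1} is precisely $LM(\bx)$ from \eqref{LM}, so $L_M(\bx;\bv)=\tfrac12\bv^TLM(\bx)\bv$. Substituting \eqref{PDE0} gives $L_M(\bx;\bv)=-\tfrac12\bv^TP_\bx^TB(\bx)P_\bx\bv$. Any $\bv$ admissible in the maximisation defining $L_M(\bx)$ satisfies $\bv^T\bff(\bx)=0$, so $P_\bx\bv=\bv$ and therefore $L_M(\bx;\bv)=-\tfrac12\bv^TB(\bx)\bv\le-\tfrac{\lambda}{2}\|\bv\|^2$. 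The key point is now to convert this into a uniform constant: the second constraint $\bv^TM(\bx)\bv=1$ combined with $\bv^TM(\bx)\bv\le\Lambda\|\bv\|^2$ forces $\|\bv\|^2\ge 1/\Lambda$, whence $L_M(\bx;\bv)\le-\lambda/(2\Lambda)$. Since this holds for every admissible $\bv$, taking the maximum gives $L_M(\bx)\le-\lambda/(2\Lambda)=-\nu<0$ for all $\bx\in K$.

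There is no real obstacle in this argument; if anything the only point requiring a word of care is that the maximum in the definition of $L_M(\bx)$ is taken over the nonempty compact set obtained by intersecting the ellipsoid $\{\bv:\bv^TM(\bx)\bv=1\}$ with the hyperplane $\bff(\bx)^\perp$, so it is attained and finite — though, as noted, the conclusion only uses that the bound $-\lambda/(2\Lambda)$ is valid pointwise for each admissible $\bv$.
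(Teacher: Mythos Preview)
Your proof is correct and follows essentially the same approach as the paper: compactness gives the uniform eigenvalue bounds $\lambda,\Lambda$, and then inserting $LM(\bx)=-P_\bx^TB(\bx)P_\bx$ into $L_M(\bx;\bv)=\tfrac12\bv^TLM(\bx)\bv$, using $P_\bx\bv=\bv$ on the constraint set and the normalisation $\bv^TM(\bx)\bv=1$, yields $L_M(\bx)\le-\lambda/(2\Lambda)$. Your pointwise-then-max presentation is in fact slightly cleaner than the paper's display, which writes the intermediate steps with a ``$-\max$'' where ``$-\min$'' is meant, though the conclusion there is the same.
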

	\begin{proof}
		The definition of $\lambda$ and $\Lambda$ follows from the fact that $B$ and $M$ are positive definite and continuous on the compact set $K$. 
		We have
\begin{eqnarray*}
2	L_M(\bx)&=&\max_{\bv\in\mathbb R^n, \bv^TM(\bx)\bv=1,\bv^T\bff(\bx)=0}
	\bv^T LM(\bx)\bv\\
	&=&-\max_{\bv\in\mathbb R^n, \bv^TM(\bx)\bv=1,\bv^T\bff(\bx)=0}
	\bv^T P_\bx^TB(\bx)P_\bx\bv\\
	&=&-\max_{\bv\in\mathbb R^n, \bv^TM(\bx)\bv=1,\bv^T\bff(\bx)=0}
	\bv^T B(\bx)\bv\\
	&\le&-\lambda\max_{\bv\in\mathbb R^n, \bv^TM(\bx)\bv=1,\bv^T\bff(\bx)=0}
	\|\bv\|^2\\
	&\le&-\frac{\lambda}{\Lambda}.
	\end{eqnarray*}
	\end{proof}
%
%
%
%


\section{Existence}
\label{ex}

Given an exponentially stable periodic orbit, we will now show the existence and uniqueness of the solution of \eqref{PDE} in its basin of attraction. We need to fix one value in \eqref{initial} to guarantee that $M$ is positive definite and to obtain uniqueness in Section \ref{uni}.

\begin{theorem}\label{main}
	Let $\Omega$ be an exponentially stable periodic orbit of $\dot{\bx}=\bff(\bx)$, $\bff\in C^\sigma(\mathbb R^n,\mathbb R^n)$, where $\sigma\ge 2$, with basin of attraction $A(\Omega)$. Fix $\bx_0\in A(\Omega)$ and $c_0\in\mathbb R^+$. 
	Let $B\in C^{\sigma-1}(A(\Omega),\mathbb S^n)$ be such that $B(\bx)$ is positive definite for all $\bx\in A(\Omega)$ and define $C\in C^{\sigma-1}(A(\Omega),\mathbb S^n)$
	by (see \eqref{P})
	$$C(\bx)=P_\bx^T B(\bx)P_\bx.$$
	
Then there exists a solution	$M\in C^{\sigma-1}(A(\Omega),\S^n)$ of the linear matrix-valued  PDE (see \eqref{LM})
	\begin{eqnarray}
		LM(\bx)&=&-C(\bx)\text{ for all }\bx\in A(\Omega)\label{PDE}\\
\text{ satisfying }
\bff(\bx_0)^TM(\bx_0)\bff(\bx_0)&=&c_0\|\bff(\bx_0)\|^4.\label{initial}
\end{eqnarray}

		The solution $M(\bx)$ is positive definite for all $\bx\in A(\Omega)$ and it is of the form 
		$$M(\bx)=\int_0^\infty  \Phi(t,0;\bx)^TC(S_t\bx)\Phi(t,0;\bx)\,dt+c_0 \bff(\bx)\bff(\bx)^T,$$
		where  $\Phi(t,0;\bx)$ denotes the principal fundamental matrix solution of  $\dot{\bphi}(t)=D(S_t\bx)\bphi(t)$ with $\Phi(0,0;\bx)=I$.
	\end{theorem}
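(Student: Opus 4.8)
The plan is to verify directly that the function displayed in the statement works. Write it as $M(\bx)=\psi(\bx)+c_0\bff(\bx)\bff(\bx)^T$, where
\[
\psi(\bx):=\int_0^\infty \Phi(t,0;\bx)^TC(S_t\bx)\Phi(t,0;\bx)\,dt ;
\]
I would then show in turn that $\psi$ (hence $M$) is well defined and of class $C^{\sigma-1}$ on $A(\Omega)$, that $M$ solves \eqref{PDE}, that it meets the normalisation \eqref{initial}, and that it is positive definite. (Note $A(\Omega)$ contains no equilibrium, so $\bff\neq\bnull$ and $P_\bx$, $C$ are defined there.)

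First I would settle convergence and regularity, which I expect to be the main obstacle. The decisive remark is that $\bff$ is the tangent field of the flow, so $\bphi(t)=\bff(S_t\bx)$ solves $\dot\bphi=D\bff(S_t\bx)\bphi$; thus $\Phi(t,0;\bx)\bff(\bx)=\bff(S_t\bx)$. Since $P_\bx\bff(\bx)=\bnull$, this gives $P_{S_t\bx}\Phi(t,0;\bx)=P_{S_t\bx}\Phi(t,0;\bx)P_\bx$, so in the quadratic form the integrand equals $\big\|B(S_t\bx)^{1/2}P_{S_t\bx}\Phi(t,0;\bx)\bv\big\|^2$ and only involves the transverse part of the variational flow. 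I would then invoke exponential stability of $\Omega$ — via Floquet theory for the monodromy operator on $\Omega$, which has the simple eigenvalue $1$ with eigenvector $\bff$ and all other multipliers of modulus $<1$, together with the fact that every $\bx\in A(\Omega)$ reaches an arbitrarily small tubular neighbourhood of $\Omega$ in a time bounded locally uniformly in $\bx$ — to obtain constants $\alpha>0$ and $K(\bx)$, locally bounded on $A(\Omega)$, with $\|P_{S_t\bx}\Phi(t,0;\bx)\|\le K(\bx)e^{-\alpha t}$, and likewise for the $\bx$-derivatives of $t\mapsto\Phi(t,0;\bx)$ up to order $\sigma-1$ (derivatives of the $C^\sigma$ flow; here $\sigma\ge 2$ is used). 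Then the integral and its $\bx$-derivatives converge locally uniformly, differentiation under the integral is justified, and $\psi\in C^{\sigma-1}(A(\Omega),\S^n)$, so $M\in C^{\sigma-1}(A(\Omega),\S^n)$.

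Next I would check the PDE. From $\Phi(t,0;\bx)\bff(\bx)=\bff(S_t\bx)$ and $P_{S_t\bx}\bff(S_t\bx)=\bnull$ one gets $C(S_t\bx)\bff(S_t\bx)=\bnull$, hence $\psi(\bx)\bff(\bx)=\bnull$ and $\bff(\bx)^T\psi(\bx)=\bnull$. Using the cocycle identity $\Phi(t,0;S_h\bx)=\Phi(t+h,0;\bx)\Phi(h,0;\bx)^{-1}$ and the substitution $s=t+h$,
\[
\psi(S_h\bx)=\big(\Phi(h,0;\bx)^{-1}\big)^T\Big(\int_h^\infty \Phi(s,0;\bx)^TC(S_s\bx)\Phi(s,0;\bx)\,ds\Big)\Phi(h,0;\bx)^{-1};
\]
differentiating at $h=0$, with $\frac{d}{dh}\Phi(h,0;\bx)^{-1}|_{h=0}=-D\bff(\bx)$, yields the Lyapunov identity $\psi'(\bx)+D\bff(\bx)^T\psi(\bx)+\psi(\bx)D\bff(\bx)=-C(\bx)$. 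Since $L$ is linear, $LM=L\psi+c_0\,L(\bff\bff^T)$; a short computation with $(\bff\bff^T)'=D\bff\,\bff\bff^T+\bff\bff^TD\bff^T$ and $\bff\bff^T\bff\bff^T=\|\bff\|^2\bff\bff^T$ shows $L(\bff\bff^T)=0$. In $L\psi$, the last two terms of \eqref{LM} carry a factor $\psi(\bx)\bff(\bx)$ or $\bff(\bx)^T\psi(\bx)$ and hence vanish, so $L\psi=\psi'+D\bff^T\psi+\psi D\bff=-C$, and therefore $LM(\bx)=-C(\bx)$ for all $\bx\in A(\Omega)$.

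Finally, the normalisation is immediate: since $\psi(\bx_0)\bff(\bx_0)=\bnull$, $\bff(\bx_0)^TM(\bx_0)\bff(\bx_0)=c_0\big(\bff(\bx_0)^T\bff(\bx_0)\big)^2=c_0\|\bff(\bx_0)\|^4$. For positive definiteness I would use
\[
\bv^TM(\bx)\bv=\int_0^\infty \big\|B(S_t\bx)^{1/2}P_{S_t\bx}\Phi(t,0;\bx)\bv\big\|^2\,dt+c_0\big(\bff(\bx)^T\bv\big)^2\ge 0,
\]
noting that equality forces $\bff(\bx)^T\bv=0$ and vanishing of the integrand near $t=0$; but at $t=0$ one has $\Phi(0,0;\bx)=I$ and $P_\bx\bv=\bv$, so the integrand equals $\bv^TB(\bx)\bv$, which is positive for $\bv\ne\bnull$ because $B(\bx)$ is positive definite. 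Hence $M(\bx)$ is positive definite throughout $A(\Omega)$, completing the argument.
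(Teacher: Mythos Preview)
Your plan coincides with the paper's: write $M=\psi+c_0\,\bff\bff^T$, verify $L(\bff\bff^T)=0$ and $\psi(\bx)\bff(\bx)=\bnull$, derive the Lyapunov identity $\psi'+D\bff^T\psi+\psi D\bff=-C$ by differentiating along the flow via the cocycle, and read off positive definiteness from the $t=0$ value of the integrand together with $c_0>0$. All of these steps are carried out essentially as you describe.

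The one place where your sketch is loose is exactly the step you flag as the main obstacle, and it is worth sharpening. The exponential bound you need is on $\partial_\bx^\balpha\big(P_{S_t\bx}\Phi(t,0;\bx)\big)$, not on $\partial_\bx^\balpha\Phi(t,0;\bx)$: the full variational flow $\Phi(t,0;\bx)$ is only bounded (it carries the neutral tangential direction $\bff$), so its $\bx$-derivatives can grow in $t$, and ``likewise for the $\bx$-derivatives of $\Phi$'' does not hold as stated. The paper spends most of its effort here: it first proves the decay of $P_{S_t\bp}\Phi$ on $\Omega$ via Floquet theory, transfers it to a tubular neighbourhood by a Gronwall argument with synchronized time, and then --- for the higher derivatives --- sets up an auxiliary linear equation $\dot\bz=\tA(t,\bx)\bz$ (with $\tA=D\bff-\bff\bff^T(D\bff+D\bff^T)/\|\bff\|^2$) whose principal solution $\Psi$ satisfies $\|P_{S_t\bx}\Psi(t,\bx)\Psi(s,\bx)^{-1}\|\le Ce^{-2\kappa_0(t-s)}$, and proves $\|\partial_\bx^\balpha(P_{S_t\bx}\bphi(t))\|\le c_\balpha e^{-\kappa_0 t}$ by induction on $|\balpha|$ using variation of constants against $\Psi$. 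Your outline is correct in spirit, but this projected-flow machinery is what actually makes the regularity step go through.
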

\begin{proof}
	Denote \begin{eqnarray}
		M_1(\bx)&=&\int_0^\infty  \Phi(t,0;\bx)^TC(S_t\bx)\Phi(t,0;\bx)\,dt\label{defM}
		\end{eqnarray}
		 and $M_2(\bx)= \bff(\bx)\bff(\bx)^T$, so that $M(\bx)=M_1(\bx)+c_0M_2(\bx)$.
	It is clear that $M_2\in C^{\sigma}(A(\Omega),\mathbb S^n)$.

We will first show \eqref{initial} in Step 1.
In Step 2 we will show  $LM_2(\bx)=0$. In Step 3 we will prove estimates on $P_{S_t\bx} \Phi(t,0;\bx)$ which will then enable us to show  $LM_1(\bx)=-C(\bx)$ in Step 4, proving \eqref{PDE}.
In Step 5 we will show that  $M_1$ is well defined and $C^{\sigma-1}$. Finally, in Step 6, we show that $M$ is positive definite.

\vspace{0.3cm}

\noindent
\underline{\bf Step 1:	 $M$ satisfies \eqref{initial}}
\vspace{0.2cm}

To show \eqref{initial}, note that $\bff(S_t\bx)$ solves $\dot{\bphi}(t)=D\bff(S_t\bx)\bphi(t)$. Hence,
$\Phi(t,0; \bx)\bff(\bx)=\bff(S_t\bx)$.
This shows that for all $\bx\in A(\Omega)$
\begin{eqnarray*}
	\bff(\bx)^TM_1(\bx)\bff(\bx)&=&
	\int_0^\infty  \bff(S_t\bx)^TC(S_t\bx)\bff(S_t\bx)\,dt\\ 
	&=&
	 \int_0^\infty  \bff(S_t\bx)^TP_{S_t\bx}^TB(S_t\bx)P_{S_t\bx}\bff(S_t\bx)\,dt\ =\ 0
	\end{eqnarray*}
	since $P_{S_t\bx}\bff(S_t\bx)=\bnull$. On the other hand we have
	\begin{eqnarray*}
		\bff(\bx)^TM_2(\bx)\bff(\bx)&=&\|\bff(\bx)\|^4.
	\end{eqnarray*}
	This shows \eqref{initial}.

	\vspace{0.3cm}
	
	\noindent
	\underline{\bf Step 2:	 $LM_2(\bx)=0$}
	\vspace{0.2cm}
	
	We have, using $(\bff(\bx))'=D\bff(\bx)\bff(\bx)$,
\begin{eqnarray*}
	LM_2(\bx)&=&D\bff(\bx)\bff(\bx)\bff(\bx)^T+\bff(\bx)\bff(\bx)^TD\bff(\bx)^T\\
	&&+
	D\bff(\bx)^T\bff(\bx)\bff(\bx)^T+\bff(\bx)\bff(\bx)^TD\bff(\bx)\\
	&&
	-\frac{\bff(\bx)\bff(\bx)^T\bff(\bx)\bff(\bx)^T(D\bff(\bx)+D\bff(\bx)^T)}{\|\bff(\bx)\|^2}\\
		&&
		-\frac{(D\bff(\bx)+D\bff(\bx)^T)\bff(\bx)\bff(\bx)^T\bff(\bx)\bff(\bx)^T}{\|\bff(\bx)\|^2}\\
		&=&0.\end{eqnarray*}

	\vspace{0.3cm}
	
	\noindent
	\underline{\bf Step 3: $P_{S_t\bx}\Phi(t,0;\bx)$ decreases exponentially }
	\vspace{0.2cm}
	
To proceed with the proof, we will now show that $P_{S_t\bx}\Phi(t,0;\bx)$ decreases exponentially. This is done in several sub-steps. First we give an estimate for points $\bx=\bp\in\Omega$ on the periodic orbit in Lemma \ref{help1}.
Then we focus on points in a neighborhood $U$ of the periodic orbit in Lemma \ref{help}; this will imply the estimate for all points $\bx\in A(\Omega)$ in Step 4, see Lemma \ref{3.6}. The matrix norm in the following lemma and the rest of the paper is $\|\cdot\|=\|\cdot\|_2$, induced by the vector norm and sub-multiplicative.

	\begin{lemma}\label{help1}
	Let $-\nu$ be the largest real part of the non-trivial Floquet exponents of the periodic orbit $\Omega$ and 
let $\epsilon>0$. 

Then there is a constant $c_1>0$ such that for all   $\bp\in\Omega$ and all $0\le s\le t$ we have
	\begin{eqnarray}
	\|P_{S_t\bp} \Phi(t,0;\bp)\Phi(s,0;\bp)^{-1}\|&\le& c_1e^{(-\nu+\epsilon)(t-s)}\label{1.3first}\\
		\|\Phi(t,0;\bp)\Phi(s,0;\bp)^{-1})\|&\le& c_1 		\label{1.3second}\end{eqnarray}
		where $\Phi(t,0;\bp)$ is the principal fundamental matrix solution of 
%
the first variation equation
	 \begin{eqnarray}\dot{\bphi}(t)&=&D\bff(S_t\bp)\bphi(t)\label{variation}
	 	\end{eqnarray}
with		$\Phi(0,0;\bp)=I$.
	\end{lemma}
	\begin{proof} 
	Note that it is sufficient to prove the result for a fixed point $\bp\in \Omega$. Indeed, if $\bq=S_\theta\bp$ is a different point on the periodic orbit, then, see \eqref{eq0.3}
\begin{eqnarray*}
\Phi(t,0;S_\theta\bp)&=&\Phi(t+\theta,\theta;\bp)\\
	&=&\Phi(t+\theta,0;\bp)\Phi(\theta,0;\bp)^{-1}\\
\Phi(t,0;\bq)\Phi(s,0;\bq)^{-1}	&=&\Phi(t+\theta,0;\bp) 
\Phi(s+\theta,0;\bp)^{-1}
\end{eqnarray*}
and the result for $\bq$ follows from the result for $\bp$.

	Equation
	\eqref{variation}  
  is a $T$-periodic, linear equation for $\bphi$, where $T$ is the period of the periodic orbit $\Omega$, and $D\bff$ is $C^{\sigma-1}$.
By Floquet theory, the principal fundamental matrix solution $\Phi(t,0;\bp)$ of (\ref{variation}) with $\Phi(0,0;\bp)=I$   can be written as
$$\Phi(t,0;\bp)=Q(t)e^{Bt},$$
where $Q(\cdot)\in C^{\sigma-1}(\mathbb R,\mathbb C^{n\times n})$ is $T$-periodic with $Q(0)=Q(T)=I$, and $B\in\mathbb C^{n\times n}$. The eigenvalues of $B$ are $0$ with algebraic multiplicity one and the others have a real part $\le -\nu<0$.   Let $S\in\mathbb C^{n\times n}$ be an invertible matrix such that $S^{-1}BS=A$ is in a special Jordan Normal Form, where the complex eigenvalues are on the diagonal and 
the 1 on the side diagonal is replaced by $\epsilon$, and the first eigenvalue is $0$.

%
Let $\be_1,\ldots,\be_n\in \mathbb R^n$ denote the standard basis of $\mathbb R^n$. We have
\begin{eqnarray}\|e^{At}\bx\|&\le& e^{(-\nu+\epsilon)t}\|\bx\|\qquad \text{for all $\bx\in \spann(\be_2,\ldots,\be_n)$}\label{exp}\\
\text{and }
e^{At}\be_1&=&\be_1\text{ for all }t\ge 0.\label{1.5} 
\end{eqnarray}

Now we show that $\bff(S_t\bp)=\lambda Q(t)S\be_1$ holds for all $t\in \mathbb R$ with $\lambda\in\mathbb C\setminus \{0\}$. 
Indeed, since $\bff(S_t\bp)$ solves (\ref{variation}), we have for all $s\in\mathbb R$
\begin{eqnarray}
\bff(S_t\bp)&=&\Phi(t,0;\bp)\Phi(s,0;\bp)^{-1}\bff(S_s\bp)\nonumber\\
&=&Q(t)e^{B(t-s)}Q(s)^{-1}\bff(S_s\bp)\nonumber\\
&=&Q(t)Se^{A(t-s)}S^{-1}Q(s)^{-1}\bff(S_s\bp).\label{eqi}
\end{eqnarray}
For $t=s+T$ we have $\bff(S_s\bp)=\bff(S_t\bp) $ and $Q(s)=Q(t)$ by the periodicity and thus
\begin{eqnarray*}
S^{-1}Q(t)^{-1}\bff(S_t\bp)&=& e^{AT}S^{-1}Q(t)^{-1}\bff(S_t\bp).
\end{eqnarray*}
The form of $A$ implies that $S^{-1}Q(t)^{-1}\bff(S_t\bp)=\lambda \be_1$ with $\lambda\not=0$, and thus 
\begin{eqnarray}
\bff(S_t\bp)&=& \ \lambda Q(t)S\be_1\label{e1}
\end{eqnarray}
holds for all $t\in \mathbb R$.

We have
\begin{eqnarray*}
\Phi(t,0;\bp)\Phi(s,0;\bp)^{-1}
&=&Q(t)e^{B(t-s)}Q(s)^{-1}\\
\|\Phi(t,0;\bp)\Phi(s,0;\bp)^{-1}\|
&\le&\|Q(t)\| \|Q(s)^{-1}\| \|S\| \|S^{-1}\| \|e^{A(t-s)}\|\\
&\le&\max_{t'\in [0,T]}\|Q(t')\|
\max_{s'\in [0,T]} \|Q(s')^{-1}\| \|S\| \|S^{-1}\| 
\end{eqnarray*}
since $Q$ is $T$-periodic.
This shows \eqref{1.3second}.

Fix $s\ge 0$ and $\bc\in \mathbb R^n$. 
Let us write   
\begin{eqnarray*}\Phi(t,0;\bp)\Phi(s,0;\bp)^{-1}\bc
&=&Q(t)Se^{A(t-s)}S^{-1}Q(s)^{-1}\bc\\
&=&\sum_{i=1}^n\beta_iQ(t)Se^{A(t-s)}\be_i,
\end{eqnarray*}
where we have defined the $\beta_i\in \mathbb C$ by 
$\sum_{i=1}^n\beta_i\be_i=S^{-1}Q(s)^{-1}\bc$.  Note that $\sum_{i=1}^n|\beta_i|^2=\|S^{-1}Q(s)^{-1}\bc\|^2$.
Using  \eqref{P} and \eqref{1.5}, we have
\begin{eqnarray}
P_{S_t\bp} 
\Phi(t,0;\bp)\Phi(s,0;\bp)^{-1}\bc&=&
\sum_{i=2}^n\beta_i
Q(t)S e^{A(t-s)}\be_i+
\beta_1Q(t)S\be_1\nonumber\\
&&
-\sum_{i=2}^n\beta_i
\frac{\bff(S_t\bp)^TQ(t)Se^{A(t-s)}\be_i}{\|\bff(S_t\bp)\|^2}\bff(S_t\bp)\nonumber\\
&&
-\beta_1
\frac{\bff(S_t\bp)^TQ(t)S \be_1}{\|\bff(S_t\bp)\|^2}\bff(S_t\bp)\nonumber\\
&=&
\sum_{i=2}^n\beta_i
Q(t)S e^{A(t-s)}\be_i \nonumber\\
&&
-\sum_{i=2}^n\beta_i
\frac{\bff(S_t\bp)^TQ(t)Se^{A(t-s)}\be_i}{\|\bff(S_t\bp)\|^2}\bff(S_t\bp).\label{e_z1}
\end{eqnarray}
The two terms with $\beta_1$ cancel each other out 
since by \eqref{e1}
$$\frac{\bff(S_t\bp)^T Q(t)S\be_1}{\|\bff(S_t\bp)\|^2}
\bff(S_t\bp)=\frac{\bff(S_t\bp)^T\lambda Q(t)S\be_1}{\|\bff(S_t\bp)\|^2}
Q(t)S\be_1=Q(t)S\be_1.$$

%
%
%
In particular, at $t=s$, we have with \eqref{e1}
\begin{eqnarray}
P_{S_t\bp}  \bc &=&
\sum_{i=2}^n
\beta_i Q(t)S \be_i
-\sum_{j=2}^n\beta_j  
\frac{\bff( S_t\bp)^T Q(t)S \be_j}{\|\bff(S_t \bp)\|^2} \bff( S_t\bp)\nonumber\\
&=&Q(t)S\left[
\sum_{i=2}^n
\beta_i  \be_i
- \lambda
\frac{\bff(S_t \bp)^T \left(\sum_{j=2}^n\beta_jQ(t)S\be_j\right)}{\|\bff( S_t\bp)\|^2} \be_1\right]\nonumber\\
\|S^{-1}Q(t)^{-1}P_{S_t \bp} \bc\|^2&\ge&\sum_{i=2}^n|\beta_i|^2.\label{extra}
\end{eqnarray}
We have from \eqref{e_z1} for $0\le s \le t$
  \begin{eqnarray*}\lefteqn{
\|
P_{S_t\bp} 
\Phi(t,0;\bp)\Phi(s,0;\bp)^{-1}\bc\|}\\
&\le& \|Q(t)\| \ \|S\|
\left\|\sum_{i=2}^n\beta_i\be_i\right\| e^{(-\nu+\epsilon)(t-s)}\\
&&
+\frac{\|\bff(S_t\bp)\|^2}{\|\bff(S_t\bp)\|^2}\|Q(t)\| \ \|S\|\left\|\sum_{j=2}^n\beta_j\be_j\right\| e^{(-\nu+\epsilon)(t-s)}
\\
&\le&2\,\|Q(t)\| \ \|S\|\|
\sqrt{\sum_{i=2}^n|\beta_i|^2} \,e^{(-\nu+\epsilon)(t-s)}\\
&\le&2\,\max_{t'\in [0,T]}\|Q(t')\|  \,\|S\| \,\max_{s'\in [0,T]}\|Q(s')^{-1}\| \, \|S^{-1}\|\,\| \bc\|\, 
e^{(-\nu+\epsilon)(t-s)}
\end{eqnarray*}
by \eqref{extra}.
This shows \eqref{1.3first} and the lemma.
\end{proof}
%
%
		
We use the following result from \cite[Corollary 3.6]{other}. In a neighborhood $U$ of the periodic orbit we define a projection of a point $\bx\in U$ onto a point $\pi(\bx)\in \Omega$ on the periodic orbit. We can synchronize the times of trajectories through $\bx$ (time $t$) and $\pi(\bx)=\bp$ (time $\theta$) such that $\pi(S_t\bx)=S_{\theta_\bx(t)}\bp$. Moreover, we define a distance  of $S_t \bx$ to the periodic orbit, in particular to $\pi(S_t\bx)$, which exponentially decreases along solutions.
This notion of stability is also called Zhukovskii stability and its relation to Lyapunov stability has been studied, e.g. in \cite{leonov2}.

\begin{lemma}\label{help_other}Let $\Omega$ be an exponentially stable periodic orbit of $\dot{\bx}=\bff(\bx)$ with $\bff\in C^\sigma (\mathbb R^n,\mathbb R^n)$ and $\sigma\ge 2$ and denote by $-\nu<0$ the maximal real part of all its non-trivial Floquet exponents.

For $\epsilon\in (0,\min(\nu,1))$ there is a compact, positively invariant neighborhood $U$ of $\Omega$ with $\Omega\subset U^\circ$ and $U\subset A(\Omega)$ and a map $\pi\in C^{\sigma-1}( U, \Omega)$ with $\pi(\bx)=\bx$ if and only if $\bx\in\Omega$.
	Furthermore, for a fixed $\bx\in U$ there is a bijective $C^{\sigma-1}$ map
	$\theta_\bx\colon [0,\infty)\to[0,\infty)$ with inverse
		$t_\bx=\theta_\bx^{-1}\in C^{\sigma-1}([0,\infty),[0,\infty))$
	such that $\theta_\bx(0)=0$ and 
	$$\pi(S_t\bx)=S_{\theta_\bx(t)}\pi(\bx)$$
	for all $t\in [0,\infty)$.
	Moreover, $\dot{\theta}_\bx(t)\in \left[1-\epsilon,1+\epsilon\right]$ for all $t\ge 0$ and 
	 $\dot{t}_\bx(\theta)\in \left[1-\epsilon,1+\epsilon\right]$ for all $\theta\ge 0$.
	 
	 Finally,  there is a constant $C>0$ such that 
		\begin{eqnarray}|\dot{t}_\bx(\theta)-1|&\le& Ce^{-\mu_0\theta}\label{res1}\\
	\|S_{t_\bx(\theta)}\bx-S_\theta \pi(\bx)\|&\le& Ce^{-\mu_0 \theta}\|\bx-\pi(\bx)\|\label{res2}
	\end{eqnarray}
	for all $\theta\ge 0$ and all $\bx\in U$, where $\mu_0=\nu-\epsilon$.
	\end{lemma}

Using Lemma \ref{help_other}, we will now show Lemma \ref{help}.
	
\begin{lemma}\label{help}
Using the notation of Lemma \ref{help_other} with $0<\epsilon<\min(1,\nu/2)$, 
			there are constants $C>0$ and
			$\kappa=\frac{\nu-2\epsilon}{1+\epsilon}>0$ such that for all $\bx\in U$  we have
			\begin{eqnarray}
				\|P_{S_t\bx} \bphi(t)\|&\le& Ce^{-\kappa t}\|P_{\bx} \bphi(0)\|\label{help.1}\\
				\|\bphi(t)\|&\le& C \|\bphi(0)\|\label{help.2}
			\end{eqnarray}
		 for all $t\ge 0$. Here, $\bphi(t)$ is a solution of the first variation equation
			\begin{eqnarray}
				\dot{\bphi}(t)&=&D\bff(S_t\bx)\bphi(t).\label{var}
				\end{eqnarray}
		\end{lemma}

\begin{proof} Denote $\mu_0=\nu-\epsilon>0$, let $\bx\in U$
 and denote the synchronized time by $\theta_{\bx}(t)=\theta(t)$, see Lemma \ref{help_other}. We now drop the subscript.

Let   
$A(\theta):=D\bff(S_\theta\bp)$ with $\bp=\pi(\bx)\in \Omega$, where $\pi$ was defined in Lemma \ref{help_other}. 
Using the inverse of $\theta(t)$, namely $t=\theta^{-1}$, we define
$D(\theta):= D\bff(S_{t(\theta)}\bx)$ and $\bpsi(\theta):=\tpsi(t(\theta))$.
Then we have by \eqref{var}
\begin{eqnarray}\frac{d}{d\theta}\bpsi(\theta)&=&\frac{d}{dt}\tpsi(t(\theta))\cdot \dot{t}(\theta) 
\ =\ D(\theta)\bpsi(\theta)\dot{t}(\theta).\label{psi-theta}
\end{eqnarray}

Since $A(\theta)=D\bff(S_\theta \bp)$ is $T$-periodic, we can use  Floquet Theory to express the principal fundamental matrix solution $\Phi(\theta,0;\bp)$ of $\dot{\by}(\theta)=A(\theta)\by(\theta)$  as in Lemma \ref{help1}. In the following   we will abbreviate it by $\Phi(\theta)$, where $\Phi(0)=I$.

As $\Phi(\theta)$ exists and is non-singular for all $\theta\in\mathbb R_0^+$, we have
\begin{eqnarray}
0&=&\frac{d}{d\theta}\left(\Phi(\theta)\Phi(\theta)^{-1}\right)\nonumber\\
&=&\left(\frac{d}{d\theta}\Phi(\theta)\right)\Phi(\theta)^{-1}+\Phi(\theta)\left(\frac{d}{d\theta}\Phi(\theta)^{-1}\right)\nonumber\\
\frac{d}{d\theta}\Phi(\theta)^{-1}&=&-\Phi(\theta)^{-1} \left(\frac{d}{d\theta}\Phi(\theta)\right)\Phi(\theta)^{-1}\nonumber\\
&=&-\Phi(\theta)^{-1}A(\theta).\label{phi-1}
\end{eqnarray}

Using \eqref{psi-theta} and \eqref{phi-1} we have
\begin{eqnarray*}
\frac{d}{d\theta}\left(\Phi(\theta)^{-1}\bpsi(\theta)\right)&=&
-\Phi(\theta)^{-1}A(\theta)\bpsi(\theta)+\Phi(\theta)^{-1}D(\theta)\bpsi(\theta)\dot{t}(\theta)\\
&=&
\Phi(\theta)^{-1}\left(D(\theta)-A(\theta)+D(\theta)(\dot{t}(\theta)-1)\right)\bpsi(\theta).
\end{eqnarray*}
Integrating both sides from $0$ to $\theta\ge 0$ we obtain
\begin{eqnarray}
\lefteqn{
\Phi(\theta)^{-1}\bpsi(\theta)-\bpsi(0)}\nonumber\\&=&\int_0^\theta \Phi(s)^{-1}\left(D(s)-A(s)+D(s)(\dot{t}(s)-1)\right)\bpsi(s)\,ds\nonumber\\
\bpsi(\theta)&=&\Phi(\theta)\bpsi(0)\nonumber\\
&&+\int_0^\theta \Phi(\theta) \Phi(s)^{-1} \left(D(s)-A(s)+D(s)(\dot{t}(s)-1)\right)\bpsi(s)\,ds.\label{psi11}
\end{eqnarray}

Since $D\bff$ is $C^1$ on the compact set  $U$, there is a Lipschitz constant $L>0$ such that
\begin{eqnarray*}
\|D(s)-A(s)\|&=&\|D\bff(S_{t(s)}\bx)-D\bff(S_s\bp)\|\\
&\le&L\|S_{t(s)}\bx-S_s \bp\|\\
&\le&LCe^{-\mu_0 s}\|\bx-\bp\|
\end{eqnarray*}
by \eqref{res2}.
Hence, altogether we have with \eqref{res1} and using that $D(s)=D\bff(S_{t(s)}\bx)$ is bounded for all $s\in [0,\infty)$ and $\bx\in U$
\begin{eqnarray}
\left\|D(s)-A(s)+D(s)(\dot{t}(s)-1)\right\|
&\le&d_1e^{-\mu_0 s}.\label{D-A}
\end{eqnarray}
\vspace{0.3cm}
\noindent
\underline{\bf Estimate on $\|\bpsi(\theta)\|$}

From \eqref{psi11} we obtain
\begin{eqnarray}
\|\bpsi(\theta)\|&\le&\|\Phi(\theta)\| \|\bpsi(0)\|+\int_0^\theta \|\Phi(\theta) \Phi(s)^{-1}\| \cdot\nonumber\\
&&
\left\|D(s)-A(s) +D(s) (\dot{t}(s)-1)\right\|\cdot \|\bpsi(s)\| \,ds\label{zwischen1}
\end{eqnarray}
for all $\theta\ge 0$.
We have
\begin{eqnarray}
\|\Phi(\theta)\|&\le& c_1\text{ for }\theta\ge 0\label{h1}\\
\|\Phi(\theta) \Phi(s)^{-1} \|&\le&c_1\text{ for }\theta-s\ge 0, \label{h2}
\end{eqnarray}
see Lemma \ref{help1}. 

Using these estimates in \eqref{zwischen1}, as well as \eqref{D-A}
gives
\begin{eqnarray*}
\|\bpsi(\theta)\|&\le&c_1\|\bpsi(0)\|+\int_0^\theta c_1 d_1e^{-\mu_0 s} \|\bpsi(s)\| \,ds.
\end{eqnarray*}
Now we apply Lemma \ref{gronwall} with $r(\theta)=\|\bpsi(\theta)\|$, $a(\theta)=c_1\|\bpsi(0)\|$,
$K(\theta)=d_1c_1 $ and $b(\theta)=e^{-\mu_0 \theta}$, giving
\begin{eqnarray*}
\|\bpsi(\theta)\|
&\le&c_1\|\bpsi(0)\|+d_1c_1^2\|\bpsi(0)\|\int_0^\theta e^{-\mu_0 s} \,ds
\cdot \exp\left(\int_0^\theta d_1c_1 e^{-\mu_0 s}\,ds\right)\nonumber\\
&\le&c_1\|\bpsi(0)\|+\frac{d_1c_1^2}{\mu_0}\|\bpsi(0)\|
\cdot \exp\left(\frac{ d_1c_1 }{\mu_0 }\right) 
\end{eqnarray*}
using $\int_0^\theta  e^{-\mu_0 s}\, ds=\frac{1}{\mu_0} (1-e^{-\mu_0 \theta})\le \frac{1}{\mu_0}$. Note that this holds for all $\theta\ge 0$ since $\bpsi(\theta)$ is continuous.
Using $\tpsi(t(\theta))=\bpsi(\theta)$ and that $t(\theta)$ is bijective this shows \eqref{help.2}.

\vspace{0.3cm}
\noindent
\underline{\bf Estimate on $\|P_{S_{t(\theta)\bx}}\bpsi(\theta)\|$}
\vspace{0.2cm}

Note that by \eqref{P}
we have
\begin{eqnarray*}
\frac{d}{dt}P_{S_t \bx}
&=&-\frac{D\bff(S_t\bx)\bff(S_t\bx)\bff(S_t\bx)^T+\bff(S_t\bx)\bff(S_t\bx)^TD\bff(S_t\bx)^T}{\|\bff(S_t\bx)\|^2}\\
&&+
\frac{\bff(S_t\bx)\bff(S_t\bx)^T}{\|\bff(S_t\bx)\|^4}
\bff(S_t\bx)^T(D\bff(S_t\bx)^T+D\bff(S_t\bx))\bff(S_t\bx).
\end{eqnarray*}
Hence, using  \eqref{psi-theta} we have
\begin{eqnarray}
\lefteqn{
\frac{d}{d\theta}(P_{S_{t(\theta)}\bx}\bpsi(\theta))}\nonumber\\
&=& \dot{t}(\theta) 
\bigg(-\frac{D\bff(S_{t(\theta)}\bx)\bff(S_{t(\theta)}\bx)\bff(S_{t(\theta)}\bx)^T+\bff(S_{t(\theta)}\bx)\bff(S_{t(\theta)}\bx)^TD\bff(S_{t(\theta)}\bx)^T}{\|\bff(S_{t(\theta)}\bx)\|^2}\nonumber\\
&&+
\frac{\bff(S_{t(\theta)}\bx)\bff(S_{t(\theta)}\bx)^T}{\|\bff(S_{t(\theta)}\bx)\|^4}
\bff(S_{t(\theta)}\bx)^T(D\bff(S_{t(\theta)}\bx)^T+D\bff(S_{t(\theta)}\bx))\bff(S_{t(\theta)}\bx)\nonumber\\
&&+D\bff(S_{t(\theta)}\bx)-\frac{\bff(S_{t(\theta)}\bx)\bff(S_{t(\theta)}\bx)^T}{\|\bff(S_{t(\theta)}\bx)\|^2}
D\bff(S_{t(\theta)}\bx)\bigg)\bpsi(\theta)\nonumber\\
&=& \dot{t}(\theta) \left(
D\bff(S_{t(\theta)}\bx)-\bff(S_{t(\theta)}\bx)\br(\theta)^T\right)P_{S_{t(\theta)}\bx}\bpsi(\theta)\label{psi-r}
\end{eqnarray}
where
\begin{eqnarray*}
\br(\theta)^T&=&\frac{\bff(S_{t(\theta)}\bx)^T
(D\bff(S_{t(\theta)}\bx)^T+D\bff(S_{t(\theta)}\bx))}{\|\bff(S_{t(\theta)}\bx)\|^2}.
\end{eqnarray*}
Note that there is a constant $R>0$ such that for all $\bx\in U$ and all $\theta\ge 0$
\begin{eqnarray}
\|\br(\theta)\|&\le&R\label{R}
\end{eqnarray}
as $\bff\in C^1$ in the compact, positively invariant set $U$.
Using \eqref{phi-1} we have 
\begin{eqnarray*}
\frac{d}{d\theta}\left(\Phi(\theta)^{-1}P_{S_{t(\theta)} \bx}\bpsi(\theta)\right)
&=&
\Phi(\theta)^{-1}\left(D(\theta)-A(\theta)+D(\theta)(\dot{t}(\theta)-1)\right)P_{S_{t(\theta)}\bx}\bpsi(\theta)\\
&&-\Phi(\theta)^{-1} \bff(S_{t(\theta)}\bx)\br(\theta)^TP_{S_{t(\theta)}\bx}\bpsi(\theta)\dot{t}(\theta).
\end{eqnarray*}
Integrating both sides from $0$ to $\theta\ge 0$ we obtain
\begin{eqnarray}
\lefteqn{
\Phi(\theta)^{-1}P_{S_{t(\theta)} \bx}\bpsi(\theta)-P_{ \bx}\bpsi(0)}\nonumber\\&=&\int_0^\theta \Phi(s)^{-1}\left[D(s)-A(s)+D(s)(\dot{t}(s)-1)\right]P_{S_{t(s)}\bx}\bpsi(s)\,ds\nonumber\\
&&-\int_0^\theta \Phi(s)^{-1} \bff(S_{t(s)}\bx)\br(s)^TP_{ S_{t(s)}\bx}\bpsi(s)\dot{t}(s)\,ds\nonumber\\
P_{S_{t(\theta)} \bx}\bpsi(\theta)&=&\Phi(\theta)P_{ \bx}\bpsi(0)\nonumber\\
&&+\int_0^\theta \Phi(\theta) \Phi(s)^{-1} \left[D(s)-A(s)+D(s)(\dot{t}(s)-1)\right]P_{ S_{t(s)}\bx}\bpsi(s)\,ds\nonumber\\
&&-\int_0^\theta \Phi(\theta)\Phi(s)^{-1} \bff(S_{t(s)}\bx)\br(s)^TP_{ S_{t(s)}\bx}\bpsi(s)\dot{t}(s)\,ds.
\label{psi}
\end{eqnarray}

We now multiply with $P_{S_{t(\theta)}\bx}$ from the left, noting that $P $ is a projection.
\begin{eqnarray}
P_{S_{t(\theta) \bx}}\bpsi(\theta)&=&
P_{S_{t(\theta)}\bx}\Phi(\theta)P_{ \bx}\bpsi(0)\nonumber\\
&&+\int_0^\theta P_{S_{t(\theta)}\bx}\Phi(\theta) \Phi(s)^{-1} \left[D(s)-A(s)+D(s)(\dot{t}(s)-1)\right]P_{ S_{t(s)}\bx}\bpsi(s)\,ds\nonumber\\
&&-\int_0^\theta P_{S_{t(\theta)}\bx}\Phi(\theta)\Phi(s)^{-1} \bff(S_{t(s)\bx})\br(s)^TP_{ S_{t(s)}\bx}\bpsi(s)\dot{t}(s)\,ds.\label{zwischen11}
\end{eqnarray}

Let us focus on the term $P_{S_{t(\theta)}\bx}\Phi(\theta)\Phi(s)^{-1} \bff(S_{t(s)\bx})$.
Define $\by(\tau):=\bff(S_{t(\tau)}\bx)$. We have $\frac{d\by}{d\tau}(\tau)=D\bff(S_{t(\tau)}\bx)\by(\tau)\dot{t}(\tau)=D(\tau)\by(\tau)\dot{t}(\tau)$.  Hence, 
\begin{eqnarray*}
	\frac{d}{d\tau}\left(\Phi(\tau)^{-1}\by(\tau)\right)&=&\Phi(\tau)^{-1}[D(\tau)-A(\tau)+D(\tau)(\dot{t}(\tau)-1)]\by(\tau)\\
\Phi(\theta)^{-1}\by(\theta)-\Phi(s)^{-1}\by(s)&=&
\int_s^\theta \Phi(\tau)^{-1}[D(\tau)-A(\tau)+D(\tau)(\dot{t}(\tau)-1)]\by(\tau)\,d\tau\\
\by(\theta)&=&\Phi(\theta)\Phi(s)^{-1}\by(s)\\
&&+
\int_s^\theta \Phi(\theta) \Phi(\tau)^{-1}[D(\tau)-A(\tau)+D(\tau)(\dot{t}(\tau)-1)]\by(\tau)\,d\tau.
\end{eqnarray*}
Applying $P_{S_{t(\theta)}\bx}$ from the left and noting that   $	P_{S_{t(\theta)}\bx}\by(\theta)=	P_{S_{t(\theta)}\bx}
\bff(S_{t(\theta)}\bx)
=\bnull$
 we have
\begin{eqnarray}
\lefteqn{-
	P_{S_{t(\theta)}\bx}\Phi(\theta)\Phi(s)^{-1} \bff(S_{t(s)}\bx)}\nonumber\\
	&=&P_{S_{t(\theta)}\bx}
	\int_s^\theta \Phi(\theta) \Phi(\tau)^{-1}[D(\tau)-A(\tau)+D(\tau)(\dot{t}(\tau)-1)]\bff(S_{t(\tau)}\bx)\,d\tau.\qquad\ \label{help3}
	\end{eqnarray}
	
	We  have from Lemma \ref{help_other} $$\|
	P_{S_{t(\theta)} \bx}-P_{S_\theta\bp}\|\le L \|
	S_{t(\theta)} \bx-S_{\theta}\bp\|
	\le LC e^{-\mu_0 \theta}$$
	 since $P_\bx$ is continuously differentiable with respect to $\bx$ and hence Lipschitz continuous in the compact set $U$.
	
	Define $\rho_0=\nu-2\epsilon$ such that $0<\rho_0<\mu_0$.
	With $	\|P_{S_\theta\bp}\Phi(\theta)\|\le c_1e^{-\mu_0 \theta}$  for $\theta\ge 0$ from Lemma \ref{help1} 
	\begin{eqnarray}
		\|P_{S_{t(\theta)}\bx}\Phi(\theta)\|&\le&
		\|P_{S_{t(\theta)}\bx}-P_{S_\theta\bp}\| \cdot\|\Phi(\theta)\|+
		\|P_{S_\theta\bp}\Phi(\theta)\|\nonumber\\
		&\le&c_1 LC e^{-\mu_0 \theta}+c_1e^{-\mu_0 \theta}\text{ by }\eqref{h1}\nonumber\\
		&\le&c_3e^{-\rho_0 \theta} \text{ for }\theta\ge 0.\label{*0}
	\end{eqnarray} 
We also have for all $\theta\ge \tau\ge 0$ 
	\begin{eqnarray}
\|	P_{S_{t(\theta)}\bx} \Phi(\theta) \Phi(\tau)^{-1}\|
&	\le& \|	P_{S_{\theta}\bp} \Phi(\theta) \Phi(\tau)^{-1}\|+\|P_{S_{t(\theta)}\bx}-P_{S_{\theta}\bp}\|
\cdot \| \Phi(\theta) \Phi(\tau)^{-1}\|\nonumber\\
&\le&c_1e^{-\mu_0(\theta-\tau)}+c_1LC e^{-\mu_0 \theta}\text{ by Lemma \ref{help1} and }\eqref{h2}\nonumber\\
&\le&c_4e^{-\rho_0(\theta-\tau)}.\label{*}
\end{eqnarray}
 Hence,  using that $\|\bff(\bx)\|\le F$ for all $\bx\in U$, we obtain with \eqref{help3} and \eqref{D-A}
	\begin{eqnarray*}
	\|	P_{S_{t(\theta)}\bx}\Phi(\theta)\Phi(s)^{-1} \bff(S_{t(s)}\bx)\|
		&\le&
		\int_s^\theta c_4 e^{-\rho_0(\theta-\tau)}d_1e^{-\mu_0\tau}F\,d\tau
	\end{eqnarray*}
and hence, using \eqref{R}, $|\dot{t}(s)|\le 1+\epsilon$ and $\rho_0<\mu_0$
\begin{eqnarray*}
	\lefteqn{
	\left\|\int_0^\theta P_{S_{t(\theta)}\bx}\Phi(\theta)\Phi(s)^{-1} \bff(S_{t(s)\bx})\br(s)^TP_{ S_{t(s)}\bx}\bpsi(s)\dot{t}(s)\,ds\right\|}\\
	&\le&(1+\epsilon)Rc_4d_1F\int_0^\theta \int_s^\theta 
	\|P_{ S_{t(s)}\bx}\bpsi(s)\| e^{-\rho_0(\theta-\tau)}e^{-\mu_0\tau}\,d\tau\,ds\\
		&=&(1+\epsilon)Rc_4d_1F\int_0^\theta	\|P_{ S_{t(s)}\bx}\bpsi(s)\|e^{-\rho_0\theta} \int_s^\theta 
	 e^{-(\mu_0-\rho_0)\tau}\,d\tau\,ds\\
	 	&\le&(1+\epsilon)Rc_4d_1F\int_0^\theta	\|P_{ S_{t(s)}\bx}\bpsi(s)\|e^{-\rho_0\theta} 
	 	\frac{1}{\mu_0-\rho_0}e^{-(\mu_0-\rho_0)s}\,ds.
	\end{eqnarray*}

Hence, we have with \eqref{zwischen11},  \eqref{*0}, \eqref{*} and \eqref{D-A}
\begin{eqnarray*}
\|
P_{S_{t(\theta)} \bx}\bpsi(\theta)\|&\le&
c_3e^{-\rho_0 \theta}\|P_{ \bx}\bpsi(0)\|\nonumber\\
&&+\underbrace{\left(1+	(1+\epsilon)\frac{RF}{\mu_0-\rho_0}\right)c_4d_1}_{=:c_5} \int_0^\theta e^{-\rho_0(\theta-s)}e^{-\mu_0 s} \|P_{ S_{t(s)}\bx}\bpsi(s)\|\,ds.
\end{eqnarray*}

Lemma \ref{gronwall} with $r(\theta)=\|P_{S_{t(\theta)}\bx}\bpsi(\theta)\|$, 
	$a(\theta)=c_3e^{-\rho_0 \theta}\|P_{\bx}\bpsi(0)\|$,
$K(\theta)=c_5 e^{-\rho_0 \theta}$ and $b(\theta)=e^{\theta(\rho_0-\mu_0)}$ gives
\begin{eqnarray}\lefteqn{
\|P_{S_{t(\theta)}\bx}\bpsi(\theta)\|}\nonumber\\
&\le&c_3e^{-\rho_0 \theta}\|P_{\bx}\bpsi(0)\|\nonumber\\
&&+c_5e^{-\rho_0 \theta}\int_0^\theta c_3e^{-\rho_0 s}\|P_{\bx}\bpsi(0)\| e^{s(\rho_0-\mu_0)}\,ds
\cdot \exp\left(\int_0^\theta c_5 e^{-\rho_0 s}e^{s(\rho_0-\mu_0)}\,ds\right)\nonumber\\
&=&c_3e^{-\rho_0 \theta}\|P_{\bx}\bpsi(0)\|+c_3c_5\|P_{\bx}\bpsi(0)\| e^{-\rho_0 \theta}\int_0^\theta  e^{-\mu_0 s}\,ds
\cdot \exp\left(c_5\int_0^\theta e^{-\mu_0 s}\,ds\right)\nonumber\\
&\le&c_3e^{-\rho_0 \theta}\|P_{\bx}\bpsi(0)\|+\frac{c_3c_5}{\mu_0}\|P_{\bx}\bpsi(0)\| e^{-\rho_0 \theta}\cdot \exp\left(\frac{c_5}{\mu_0}\right)\nonumber
\end{eqnarray}
using $\int_0^\theta  e^{-\mu_0 s}\, ds=\frac{1}{\mu_0} (1-e^{-\mu_0 \theta})\le \frac{1}{\mu_0}$. Note that this holds for all $\theta\ge 0$ since $\bpsi(\theta)$ is continuous.

This proves \eqref{help.1} with $\kappa:=\rho_0/(1+\epsilon)$ using $\bphi(t(\theta))=\bpsi(\theta)$ and that $t$ is a bijection on $[0,\infty)$ as well as that we have for $\theta\ge 0$
$$t(\theta)=\int_0^\theta \dot{t}(s)\,ds\le (1+\epsilon) \theta,$$ using $\dot{t}(s)\le 1+\epsilon$ and  $t(0)=0$.
\end{proof}

\vspace{0.3cm}

\noindent
\underline{\bf Step 4: $LM_1(\bx)=-C(\bx)$}
\vspace{0.2cm}

Now fix $\bx\in A(\Omega)$. Denote by $\Phi(\tau,\theta;\bx)=\Phi(\tau,0;\bx)\Phi(\theta,0;\bx)^{-1}$ the state transition matrix.
Note that for fixed $\bx$ there exists a $\theta_0>0$ such that $S_\tau\bx$, $D\bff(S_\tau\bx)$  and thus also $\Phi(\tau,\theta;\bx)$ are defined for all $\tau,\theta\ge -\theta_0$, and $\Phi(\tau,\theta;\bx)$ is $C^{\sigma-1}$ with respect to $\bx$, $\tau$ and $\theta$. 

By the Chapman-Kolmogorov identities, cf. e.g. \cite{chicone}, p. 151, we have
\begin{eqnarray}
	\frac{d}{d\tau}\Phi(\tau,\theta;\bx)&=&D\bff(S_\tau\bx)\Phi(\tau,\theta;\bx),\nonumber\\
	\frac{d}{d\theta}\Phi(\tau,\theta;\bx)&=&-\Phi(\tau,\theta;\bx)D\bff(S_{\theta}\bx),\label{t0}\\
	\Phi(\theta,\theta;\bx)&=&I,\label{I}\\
	\Phi(\tau,0;S_\theta \bx)&=&\Phi(\tau+\theta,\theta;\bx).\label{eq0.3}
\end{eqnarray}
for all $\tau,\tau+\theta\ge -\theta_0$. Also,
\begin{eqnarray}
\Phi(\tau-T_0,-T_0;S_{\theta+T_0}\bx)&=&\Phi(\tau,0;S_\theta \bx)\label{eq0.4}
\end{eqnarray}
for $\tau\ge T_0\ge 0$ and $|\theta|\le \theta_0$.
The last two equations follow from the fact that both functions satisfy the same initial value problem. For example, both sides of \eqref{eq0.4} satisfy $\frac{d}{d\tau} \by(\tau)=D\bff(S_{\tau+\theta}\bx)\by(\tau)$.

Define
\begin{eqnarray}
g_T(\theta,\bx)&=&
\int_\theta^{T+\theta} \Phi(\tau,\theta; \bx)^T P_{S_\tau \bx}^T
B(S_\tau \bx)P_{S_\tau \bx}\Phi(\tau,\theta; \bx)\,d\tau.\label{g1}
\end{eqnarray}
We have for all $\theta\ge -\theta_0$ by a change of variables and (\ref{eq0.3})
\begin{eqnarray}
g_T(\theta,\bx)&=&
\int_0^{T} \Phi(\tau+\theta,\theta; \bx)^T P_{S_{\tau+\theta} \bx}^T
B(S_{\tau +\theta} \bx)P_{S_{\tau+\theta} \bx}\Phi(\tau+\theta,\theta; \bx)\,d\tau\label{gextra}
\\
&=&
\int_0^{T} \Phi(\tau,0; S_\theta\bx)^T P_{S_{\tau+\theta} \bx}^T
B(S_{\tau+\theta} \bx)P_{S_{\tau+\theta} \bx}\Phi(\tau,0; S_\theta\bx)\,d\tau.\label{g2}
\end{eqnarray}

We will show that $g_T(\theta,\bx)$ converges pointwise and $\frac{d }{d \theta}g_T(\theta,\bx)$ converges uniformly
in $|\theta|\le \theta_0$
as $T\to\infty$ so that $\frac{d }{d \theta}\lim_{T\to\infty}g_T(\theta,\bx)=\lim_{T\to \infty}\frac{d }{d \theta}g_T(\theta,\bx)$ for $|\theta|<\theta_0$. 
As the set $S=\overline{\bigcup_{t=-\theta_0}^\infty \{S_t\bx\}}$ is compact and  $B(\cdot)$ is continuous, there exists $B^*>0$ such that
\begin{eqnarray}
\|B(S_t\bx)\|&\le&B^*\label{esti}
\end{eqnarray}
for all $t\ge -\theta_0$.

\begin{lemma}\label{3.6}
	For a fixed $\bx\in A(\Omega)$ there exists $c>0$ such that
		\begin{eqnarray}  \|\Phi(\tau,0; S_\theta\bx)^T P_{S_{\tau}(S_\theta\bx)}^T
		B(S_{\tau+\theta}\bx)P_{S_{\tau}(S_\theta\bx)}\Phi(\tau,0; S_\theta\bx)\|
		&\le& ce^{-2\kappa \tau}\label{expab}
	\end{eqnarray}
	for all $|\theta|\le \theta_0$ and all $\tau\ge 0$.
	\end{lemma}
	\begin{proof}Since $\bx\in A(\Omega)$ and $\Omega\subset U^\circ$, where $U$ is compact and positively invariant, there exists $T_0$ such that
		$S_{\tau+\theta}\bx \in U$ for all $\tau\ge T_0$ and all $|\theta|\le \theta_0$.
		Since all terms in \eqref{expab} depend continuously on $\tau $ and $\theta$, we can choose $c$ such that the inequality holds for all $|\theta|\le \theta_0$ and all $\tau \in [0,T_0]$.

	We denote $\by=S_{\theta+T_0}\bx\in U$. With $t=\tau-T_0$ we have
		\begin{eqnarray*}
		\|P_{S_t\by}\bphi(t)\|	&\le&Ce^{-\kappa t}\|P_\by\bphi(0)\|
		\end{eqnarray*}
		by \eqref{help.1} of Lemma \ref{help}, where $\bphi(t)$ solves $\dot{\bphi}=D\bff(S_t\by)\bphi$. Note that by \eqref{eq0.4}  $\Phi(\tau,0;S_\theta \bx)=\Phi(\tau-T_0,-T_0;\by)=\Phi(\tau-T_0,0;\by)\Phi(-T_0,0;\by)^{-1}$.
		
		 Taking each of the columns of $\Phi(-T_0,0;\by)^{-1}$ for $\bphi(0)$, we obtain, first with the matrix norm $\|\cdot \|_1$ and the vector norm $\|\cdot\|_1$, and then also for $\|\cdot\|_2$  with a different constant as all matrix and vector norms are equivalent,
		\begin{eqnarray*}
		\|P_{S_\tau (S_\theta\bx)}	\Phi(\tau,0;S_\theta \bx)\|&=&
		\|P_{S_t\by}\Phi(t,0;\by)	\Phi(-T_0,0;\by)^{-1}\|\\
		&\le&C'e^{-\kappa \tau}.
		\end{eqnarray*}
		Using \eqref{esti} completes the proof.
		\end{proof}

The right-hand side of \eqref{expab} is integrable over $\tau\in [0,\infty)$. Hence, by Lebesgue's dominated convergence theorem, the function $g_T(\theta,\bx)$, see (\ref{g2}), converges pointwise for $T\to\infty$ for $|\theta|\le \theta_0$.
This shows with \eqref{g1} that $M_1(\bx)=\lim_{T\to\infty}g_T(0,\bx)$ is well defined and  symmetric.

Also, using (\ref{g1}), (\ref{I}) and (\ref{t0}), we have  
\begin{eqnarray*}
	\frac{d}{d \theta}g_T(\theta,\bx)
	&=&\Phi(T+\theta,\theta; \bx)^T P_{S_{T+\theta}\bx}^T
	B(S_{T+\theta}\bx)P_{S_{T+\theta}\bx}\Phi(T+\theta,\theta; \bx)
	-P_{S_\theta \bx}^TB(S_\theta \bx)P_{S_\theta \bx}\\
	&&
	-D\bff(S_\theta \bx)^T 
	\int_\theta^{T+\theta} \Phi(\tau,\theta; \bx)^T P_{S_\tau \bx}^T
	B(S_\tau \bx)P_{S_\tau \bx}\Phi(\tau,\theta; \bx)\,d\tau\\
	&&-
	\int_\theta^{T+\theta} \Phi(\tau,\theta; \bx)^T P_{S_\tau \bx}^T
	B(S_\tau \bx)P_{S_\tau \bx}\Phi(\tau,\theta; \bx)\,d\tau  D\bff(S_\theta \bx)\\
	&=&\Phi(T,0; S_\theta \bx)^T P_{S_{T}(S_\theta\bx)}^T
	B(S_{T+\theta}\bx)P_{S_{T}(S_\theta\bx)}\Phi(T,0;S_\theta  \bx)
	-P_{S_\theta \bx}^TB(S_\theta \bx)P_{S_\theta \bx}\\
	&&
	-D\bff(S_\theta \bx)^T 
	\int_0^{T} \Phi(\tau,0; S_\theta \bx)^T P_{S_{\tau}(S_\theta \bx)}^T
	B(S_{\tau+\theta} \bx)P_{S_{\tau}(S_\theta \bx)}\Phi(\tau,0;S_\theta  \bx)\,d\tau\\
	&&-
	\int_0^{T} \Phi(\tau,0; S_\theta \bx)^T P_{S_{\tau}(S_\theta \bx)}^T
	B(S_{\tau+\theta} \bx)P_{S_{\tau}(S_\theta \bx)}\Phi(\tau,0;S_\theta \bx)\,d\tau  D\bff(S_\theta \bx)
	\end{eqnarray*}
	by \eqref{eq0.3}.
The right-hand side
converges uniformly for $|\theta|\le \theta_0$ as $T\to \infty$ by (\ref{expab}).
Hence, we can exchange limit and derivative, obtaining for $|\theta|<\theta_0$, again with \eqref{expab},
\begin{eqnarray}
	\lefteqn{
	\frac{d}{d \theta}\lim_{T\to \infty}g_T(\theta,\bx)}\nonumber\\
	&=&\lim_{T\to \infty}\frac{d}{d \theta}g_T(\theta,\bx)\nonumber\\
	&=&-P_{S_\theta \bx}^TB(S_\theta \bx)P_{S_\theta \bx}\nonumber\\
	&&
	-D\bff(S_\theta \bx)^T 
	\int_0^{\infty} \Phi(\tau,0; S_\theta \bx)^T P_{S_{\tau}(S_\theta \bx)}^T
	B(S_{\tau+\theta} \bx)P_{S_{\tau}(S_\theta \bx)}\Phi(\tau,0;S_\theta  \bx)\,d\tau\nonumber\\
	&&-
	\int_0^{\infty} \Phi(\tau,0;S_\theta \bx)^T P_{S_{\tau}(S_\theta \bx)}^T
	B(S_{\tau+\theta} \bx)P_{S_{\tau}(S_\theta \bx)}\Phi(\tau,0; S_\theta \bx)\,d\tau  D\bff(S_\theta \bx).\label{help33}
\end{eqnarray}

Altogether, we thus have
\begin{eqnarray*}
	M_1'(\bx)&=&\frac{d}{d\theta} M_1(S_\theta \bx)\bigg|_{\theta=0}\\
	&=&\frac{d}{d\theta} \lim_{T\to\infty}\left[
	\int_0^T \Phi(\tau,0;S_{\theta} \bx)^T P_{S_\tau(S_\theta\bx)}^T
	B(S_\tau S_\theta \bx)P_{S_\tau(S_\theta\bx)}\Phi(\tau,0;S_{\theta} \bx)\,d\tau\right]\Bigg|_{\theta=0}\\
	&=&\frac{d}{d\theta} \lim_{T\to\infty}g_T(\theta,\bx)\bigg|_{\theta=0}
	\mbox{ by (\ref{g2})}\\
	&=&
	-P_\bx^T
	B(\bx)P_\bx
	- D\bff( \bx)^T M_1(\bx)
	-
	M_1(\bx)D\bff(\bx)\text{ by (\ref{help33})}\\
	&=&
	-
	C(\bx)
	- D\bff( \bx)^T M_1(\bx)	-
	M_1(\bx)D\bff(\bx)\\
	&&
	-\frac{M_1(\bx)\bff(\bx)\bff(\bx)^T(D\bff(\bx)+D\bff(\bx)^T)+(D\bff(\bx)+D\bff(\bx)^T)\bff(\bx)\bff(\bx)^TM_1(\bx)}{\|\bff(\bx)\|^2}.
\end{eqnarray*}
The last term is zero since $\Phi(t,0;\bx)\bff(\bx)=\bff(S_t\bx)$ and thus
\begin{eqnarray*}
M_1(\bx)\bff(\bx)&=&\int_0^\infty  \Phi(t,0;\bx)^TC(S_t\bx)\Phi(t,0;\bx)\bff(\bx)\,dt\\
&=&\int_0^\infty  \Phi(t,0;\bx)^TP_{S_t\bx}^TB(S_t\bx)P_{S_t\bx}\bff(S_t\bx)\,dt\\
&=&\bnull
\end{eqnarray*}
using $P_{S_t\bx}\bff(S_t\bx)=\bnull$. Similarly we have also $\bff(\bx)^TM_1(\bx)=0$.

This shows the matrix equation $LM_1(\bx)=-C(\bx)$ and thus (\ref{PDE}).

\vspace{0.3cm}

\noindent
\underline{\bf Step 5: Smoothness of $M_1$ }
\vspace{0.2cm}

To prove that $M_1\in C^{\sigma-1}(A(x_0),\mathbb S^n)$, we will define $\bpsi(t,\bx):=P_{S_t\bx}\bphi(t)$, where $\bphi(t)$ is a solution of the first variation equation $\dot{\bphi}(t)=D\bff(S_t\bx)\bphi(t)$. 
 We will show by induction with respect to $|\balpha|$ that
\begin{eqnarray}
\|\partial_\bx^\balpha( \bpsi(t,\bx))\|
&\le&c_\balpha e^{-\kappa_0 t}\max_{\bnull\le \bbeta\le \balpha}\|\partial_\bx^\bbeta (\bpsi(0,\bx))\|\label{state0}
\end{eqnarray}
for all $|\balpha|\le \sigma-1$, $\bx\in U$ and $t\ge 0$, where $\kappa_0:=\frac{\kappa}{2}$.
For $\balpha=\bnull$, \eqref{state0} follows directly from Lemma \ref{help}.

For $\bx\in U$ define $\bphi_0(t,\bx)=\frac{\bff(S_t\bx)}{\|\bff(S_t\bx)\|^2}$ and 
$\ba=\bff(\bx)$. For $i=1,\ldots,n$ let  $\bphi_i(0,\bx)=P_\bx \be_i$
%
and let $\bphi_i(t,\bx)$ be a solution of $\dot{\bphi}(t)=D\bff(S_t\bx)\bphi(t)$.
Then $$\Psi(t,\bx)=\left(a_1\bphi_0(t,\bx)+P_{S_t\bx}\bphi_1(t,\bx),\ldots,a_n\bphi_0(t,\bx)+P_{S_t\bx}\bphi_n(t,\bx)\right)$$
 is the principal fundamental matrix solution of 
\begin{eqnarray}
\frac{d}{dt}\Psi(t,\bx)&=&\tA(t,\bx)\Psi(t,\bx)\label{3.41}
\end{eqnarray}
where $\tA(t,\bx)=D\bff(S_t\bx)-\frac{\bff(S_t\bx)\bff(S_t\bx)^T}{\|\bff(S_t\bx)\|^2}(D\bff(S_t\bx)^T+D\bff(S_t\bx))$. Indeed, 
it can be shown directly that $\bphi_0(t,\bx)=\frac{\bff(S_t\bx)}{\|\bff(S_t\bx)\|^2}$ is a solution of \eqref{3.41} and  in a similar way to \eqref{psi-r} that $P_{S_t\bx}\bphi_i(t,\bx)$ for $i=1,\ldots,n$ are solutions of \eqref{3.41}, see also \cite{leonov2}. Note that 
$$\Psi(0,\bx)=\bphi_0(0,\bx)\ba^T+P_\bx I=\frac{\bff(\bx)\bff(\bx)^T}{\|\bff(\bx)\|^2}+P_\bx I=I.$$

We have with Lemma \ref{help} 
\begin{eqnarray*}
	\|P_{S_t\bx}\Psi(t,\bx)\|_1&=&
	\max_{j=1,\ldots,n} \|a_jP_{S_t\bx}\bphi_0(t,\bx)+P_{S_t\bx}\bphi_j(t,\bx)\|_1\\
	&=&
	\max_{j=1,\ldots,n} \| P_{S_t\bx}\bphi_j(t,\bx)\|_1\\
	&\le&
	Ce^{-2\kappa_0 t}
	\max_{j=1,\ldots,n} \| P_{\bx}\bphi_j(0,\bx)\|_1\\
	&=&
	Ce^{-2\kappa_0 t}
	\max_{j=1,\ldots,n} \|a_jP_{\bx}\bphi_0(0,\bx)+P_{\bx}\bphi_j(0,\bx)\|_1\\
	&=&
	Ce^{-2\kappa_0 t}\|P_{\bx}\Psi(0,\bx)\|_1,
	\end{eqnarray*}
	and, as all norms are equivalent and $\Psi(0,\bx)=I$, with a different constant
	\begin{eqnarray}
	\|P_{S_t\bx}\Psi(t,\bx)\|	&\le&
	Ce^{-2\kappa_0 t}
	\label{Psi-t}
\end{eqnarray}
 for all $t\ge 0$ and all $\bx\in U$. 

We will now show the estimate
\begin{eqnarray}
	\|P_{S_t\bx}\Psi(t,\bx)\Psi(s,\bx)^{-1}\|&\le&Ce^{-2\kappa_0 (t-s)}\label{Psi-ts}
\end{eqnarray} for all $t\ge 0$, $0\le s\le t$ and all $\bx\in U$.
From \eqref{Psi-t} we obtain by considering the point $S_s\bx$ and the time $t-s$
\begin{eqnarray}
	\|P_{S_{t-s}S_s\bx}\Psi(t-s,S_s\bx)\|&\le&Ce^{-2\kappa_0 (t-s)}.\label{helpi1}
	\end{eqnarray}
Denoting the transition matrix from $s$ to $t$ for \eqref{3.41} by $\Psi(t,s;\bx)$, we have
with \eqref{eq0.3} 
\begin{eqnarray}
\Psi(t-s,S_s\bx)&=&\Psi(t-s,0;S_s\bx)\nonumber\\
&=&\Psi(t,s;\bx)\nonumber\\
&=&\Psi(t,\bx)\Psi(s,\bx)^{-1}\label{helpi2}\\
\|P_{S_t\bx}\Psi(t,\bx)\Psi(s,\bx)^{-1}\|&=&\|P_{S_{t-s}S_s\bx}\Psi(t-s, S_s\bx)\| \text{ by \eqref{helpi2} }\nonumber\\
&\le&Ce^{-2\kappa_0 (t-s)}\nonumber
\end{eqnarray}
by   \eqref{helpi1}. 
This shows \eqref{Psi-ts}.

	%

Now we assume \eqref{state0} is true for all $\balpha'$ with $|\balpha'|\le k-1$ and seek to show it for $|\balpha|=k\le \sigma-1$. 
We will write
\begin{eqnarray}
\partial_\bx^\balpha \bpsi(t,\bx)&=&
\left[I-\frac{\bff(S_t\bx)\bff(S_t\bx)^T}{\|\bff(S_t\bx)\|^2}+\frac{\bff(S_t\bx)\bff(S_t\bx)^T}{\|\bff(S_t\bx)\|^2}\right]
\partial_\bx^\balpha \bpsi(t,\bx)\nonumber\\
&=&P_{S_t\bx}\partial_\bx^\balpha \bpsi(t,\bx)
+\frac{\bff(S_t\bx)\bff(S_t\bx)^T}{\|\bff(S_t\bx)\|^2}
\partial_\bx^\balpha \bpsi(t,\bx)\label{first-second}
\end{eqnarray}
and show that each term satisfies the exponential bound in \eqref{state0}.

For the second term of \eqref{first-second}, we have
\begin{eqnarray*}
\frac{\bff(S_t\bx)\bff(S_t\bx)^T}{\|\bff(S_t\bx)\|^2}\bpsi(t,\bx)&=&
\frac{\bff(S_t\bx)\bff(S_t\bx)^T}{\|\bff(S_t\bx)\|^2}
P_{S_t\bx}\bphi(t)\ =\ \bnull
\end{eqnarray*}
for all $t\ge 0$ and $\bx\in U$
since $\bff(S_t\bx)^TP_{S_t\bx}=\bnull$.
Hence, 
\begin{eqnarray*}\bnull&=&
\partial_\bx^\balpha\left(
\frac{\bff(S_t\bx)\bff(S_t\bx)^T}{\|\bff(S_t\bx)\|^2}\bpsi(t,\bx)\right)\\
&=&\frac{\bff(S_t\bx)\bff(S_t\bx)^T}{\|\bff(S_t\bx)\|^2}\partial_\bx^\balpha\bpsi(t,\bx)
\\
&&+\sum_{\balpha_1+\balpha_2=\balpha,|\balpha_1|\ge 1} c_{\balpha_1} \partial^{\balpha_1}_\bx \left(
\frac{\bff(S_t\bx)\bff(S_t\bx)^T}{\|\bff(S_t\bx)\|^2}\right)	\partial^{\balpha_2}_\bx \bpsi(t,\bx)\\
\frac{\bff(S_t\bx)\bff(S_t\bx)^T}{\|\bff(S_t\bx)\|^2}\partial_\bx^\balpha\bpsi(t,\bx)
&=&-\sum_{\balpha_1+\balpha_2=\balpha,|\balpha_1|\ge 1} c_{\balpha_1} \partial^{\balpha_1}_\bx \left(
\frac{\bff(S_t\bx)\bff(S_t\bx)^T}{\|\bff(S_t\bx)\|^2}\right)	\partial^{\balpha_2}_\bx \bpsi(t,\bx).
\end{eqnarray*}
By induction assumption and smoothness of $\bff$ in the compact, positively invariant set $U$, the norm of the right-hand side is smaller than
$c e^{-\kappa_0 t}\max_{\bnull\le \bbeta<\balpha}\|\partial_\bx^\bbeta\bpsi(0,\bx)\|$, and thus so is the left-hand side. This shows the exponential bound on the norm of the second term of \eqref{first-second}.

For the first term of \eqref{first-second}, we have
\begin{eqnarray}
	\frac{d}{dt}
	\partial_\bx^{\balpha}\bpsi(t,\bx)
	&=&\partial_\bx^{\balpha}  \frac{d}{dt}\bpsi(t,\bx)\nonumber\\
	&=&\partial_\bx^{\balpha}\left[\tA(t,\bx)\bpsi(t,\bx)\right]\nonumber\\
	&=&\tA(t,\bx)\partial_\bx^{\balpha}\bpsi(t,\bx)\nonumber\\
	&&
	+\underbrace{\sum_{\balpha_1+\balpha_2=\balpha,|\balpha_1|\ge 1} c_{\balpha_1} \partial^{\balpha_1}_\bx \tA(t,\bx)
	\partial^{\balpha_2}_\bx \bpsi(t,\bx)}_{=:g(t,\bx)}.\label{eq11}
\end{eqnarray} Note that
we could exchange $ \partial_\bx^{\balpha}$ and $\frac{d}{dt} $ above
since $P_{S_t\bx}$ and  $\bphi$ are smooth enough,
cf. e.g. \cite{hartman}, Chapter V, Theorem 4.1.

From the induction assumption we know that for all $|\balpha_2|\le k-1$
$$
\|\partial_\bx^{\balpha_2}\bpsi(t,\bx)\|
\le c_{\balpha_2} e^{-\kappa_0 t}\max_{\bnull\le \bbeta\le\balpha_2}\|\partial_\bx^\bbeta \bpsi(0,\bx)\| .$$
From the definition of $\tA$, since $\bff\in C^\sigma(\mathbb R^n,\mathbb R^n)$ and $U$ is compact and positively invariant, there is a constant bounding  $\|\partial^{\balpha_1}_\bx \tA(t,\bx)\|$ for all $
|\balpha_1|\le \sigma-1$, $t\ge 0$ and $\bx\in U$.
This shows altogether that \begin{eqnarray}
	\|g(t,\bx)\|&\le&Ce^{-\kappa_0 t}\max_{\bnull\le \bbeta < \balpha}\|\partial_\bx^\bbeta \bpsi(0,\bx)\| \label{g-est}
	\end{eqnarray}
	for all $t\ge 0 $ and $\bx\in U$.
	
Using the variation of the constant formula, the solution  $\bz(t,\bx)=\partial_\bx^{\balpha}\bpsi(t,\bx)$ of
\begin{eqnarray}
\frac{d}{dt}\bz(t,\bx)&=&
\tA(t,\bx)\bz(t,\bx)+g(t,\bx),\label{eq-g}
\end{eqnarray} 
see \eqref{eq11}, satisfies
\begin{eqnarray*}
	\bz(t,\bx)&=&
	\Psi(t,\bx)\bz(0,\bx)+\int_0^t\Psi(t,\bx)\Psi(s,\bx)^{-1}g(s,\bx)\,ds.
\end{eqnarray*}
Application of the projection $P_{S_t\bx}$ from the left on both sides gives
\begin{eqnarray*}
	P_{S_t\bx}\partial_\bx^{\balpha}\bpsi(t,\bx)&=&
	P_{S_t\bx}\Psi(t,\bx)\partial_\bx^{\balpha}\bpsi(0,\bx)+\int_0^tP_{S_t\bx}\Psi(t,\bx)\Psi(s,\bx)^{-1}g(s,\bx)\,ds.
\end{eqnarray*}
Then we have with \eqref{Psi-t}, \eqref{Psi-ts} and \eqref{g-est} 
\begin{eqnarray*}
\|		P_{S_t\bx}\partial_\bx^{\balpha}\bpsi(t,\bx)\|
&\le&\left(Ce^{-2\kappa_0 t}+\int_0^tCe^{-2\kappa_0 (t-s)} e^{-\kappa_0 s}\,ds\right)
\max_{\bnull\le \bbeta\le\balpha}\|\partial_\bx^\bbeta \bpsi(0,\bx)\| \\
&=&\left(Ce^{-2\kappa_0 t}+Ce^{-2\kappa_0 t} \int_0^t e^{\kappa_0 s}\,ds\right)
\max_{\bnull\le \bbeta\le\balpha}\|\partial_\bx^\bbeta \bpsi(0,\bx)\|\\
&\le&\left(Ce^{-2\kappa_0 t}+\frac{C}{\kappa_0}e^{-2\kappa_0 t}  e^{\kappa_0 t}\right)
\max_{\bnull\le \bbeta\le\balpha}\|\partial_\bx^\bbeta \bpsi(0,\bx)\|\\	
&\le& c_\balpha e^{-\kappa_0 t}
\max_{\bnull\le \bbeta\le\balpha}\|\partial_\bx^\bbeta \bpsi(0,\bx)\|.
\end{eqnarray*}
This shows the bound on the first term of \eqref{first-second} and thus \eqref{state0}.

Next, we show that $\int_0^T \partial_\bx^\balpha (\Phi(t,0;\bx)^T P_{S_t\bx}^TB(S_t\bx)P_{S_t\bx}\Phi(t,0;\bx))\,dt$
converges uniformly with respect to $\bx$ as $T\to \infty$
for $1\le |\balpha|\le\sigma-1$. Let
$\bx\in A(\Omega)$ and let $O$ be a bounded, open neighborhood of $\bx$,
such that $\overline{O}\subset A(\Omega)$. Since $\overline{O}$ is
compact,  there is a
$T_0\in \mathbb R^+_0$ such that $S_{T_0+t}\overline{O}\subset
U$ holds for all $t\ge 0$. Hence, it is sufficient to show the statement for all $\bx\in U$. 

We can write the $i$-th column of $P_{S_t\bx}\Phi(t,0,\bx)$ as
$\bpsi(t,\bx)=P_{S_t\bx}\Phi(t,0;\bx)\be_i=P_{S_t\bx}\bphi(t,\bx)$ with $\bphi(0,\bx)=\be_i$. Thus, $\partial_{\bx}^\balpha \bpsi(0,\bx)=\partial_\bx^\balpha P_\bx \be_i$, which can be bounded by a constant for all $\bx\in U$ and $|\balpha|\le \sigma-1$ by the smoothness of $\bff$.
Similarly, $\partial_\bx^\balpha B(S_t\bx)$ can be bounded by a constant for all $\bx\in U$, $t\ge 0$ and $|\balpha|\le \sigma-1$. Altogether, we have by    (\ref{state0})  \begin{eqnarray*}\int_0^T \|\partial_\bx^\balpha (\Phi(t,0;\bx)^T P_{S_t\bx}^TB(S_t\bx)P_{S_t\bx}\Phi(t,0;\bx))\|\,dt&\le&
	\int_0^T \tilde{c} e^{-2\kappa_0 t}\,dt
\end{eqnarray*}
for all $\bx\in U$ and all $T\ge 0$. Hence, $\int_0^T \partial_\bx^\balpha (\Phi(t,0;\bx)^T P_{S_t\bx}^TB(S_t\bx)P_{S_t\bx}\Phi(t,0;\bx))\,dt $ converges uniformly as $T\to \infty$.
This proves that $M_1\in C^{\sigma-1}(A(\Omega),\mathbb S^n)$.

\vspace{0.3cm}

\noindent
\underline{\bf Step 6: positive definiteness}
\vspace{0.2cm}

To show the positive definiteness of $M$, fix $\bx\in A(\Omega)$ and consider a general  
$$\mathbb R^n\ni\bw=\left(I-\frac{\bff(\bx)\bff(\bx)^T}{\|\bff(\bx)\|^2}
+\frac{\bff(\bx)\bff(\bx)^T}{\|\bff(\bx)\|^2}\right)\bw
	=\bv+c\frac{\bff(\bx)}{\|\bff(\bx)\|^2}$$ with $\bv=P_\bx \bw$, so $\bv\perp\bff(\bx)$, and $c=\bff(\bx)^T\bw$. Hence, using $C(\bx)=P_\bx^TB(\bx)P_\bx$, we have
	\begin{eqnarray*}\bw^TM(\bx)\bw&=&
\int_0^\infty [P_{S_t\bx}\Phi(t,0;\bx)\bw]^TB(S_t\bx)[P_{S_t\bx}\Phi(t,0;\bx)\bw]\,dt	+c_0
\bw^T\bff(\bx)\bff(\bx)^T\bw\\
&=&
\int_0^\infty [P_{S_t\bx}\Phi(t,0;\bx)\bw]^TB(S_t\bx)[P_{S_t\bx}\Phi(t,0;\bx)\bw]\,dt	+c_0c^2\\
&\ge &0
	\end{eqnarray*}
	due to the positive definiteness of $B$ and $c_0>0$.
	We seek to show that the term is only $0$ if $\bw=\bnull$.
	
	Let us assume that the term is zero, i.e. both summands are zero. The first term, since $B$ is positive definite, is only zero if
	$P_{S_t\bx}\Phi(t,0;\bx)\bw=\bnull$ for all $t\ge 0$. In particular, for $t=0$ we have $\bnull=P_\bx \bw=\bv$. If the second term is zero, then, since $c_0>0$, $c=0$, which together yields $\bw=\bnull$.
	
	This proves the theorem.
\end{proof}

\section{Uniqueness}
\label{uni}

To show uniqueness of solutions to \eqref{PDE} and \eqref{initial} in Theorem \ref{th:uni}, let us first state the following lemma.

\begin{lemma}\label{lem}
Denote by $\bphi_1$ and $\bphi_2$ two solutions of $\dot{\bphi}(t)=D\bff(S_t\bx)\bphi(t)$.
Let $M\in C^1(\mathbb R^n,\S^n)$ such that $M(\bx)$ is positive definite for all $\bx\in \mathbb R^n$. 

Then we have for any $\bx\in \mathbb R^n$ with $\bff(\bx)\not=\bnull$ and all $t\ge 0$, 
denoting $\bphi_0(t)=\frac{\bff(S_t\bx)}{\|\bff(S_t\bx)\|^2}$,
\begin{eqnarray}
\frac{d}{dt}\left[\bphi_1(t)^TP_{S_t\bx}^TM(S_t\bx)P_{S_t\bx}\bphi_2(t)\right]
&=&\bphi_1(t)^TP_{S_t\bx}^TLM(S_t\bx)P_{S_t\bx}\bphi_2(t),\label{4.1}\\
\frac{d}{dt}\left[\bphi_0(t)^T M(S_t\bx) \bphi_0(t)\right]
&=&\bphi_0(t)^T LM(S_t\bx) \bphi_0(t),\label{4.2}\\
\frac{d}{dt}\left[\bphi_1(t)^TP_{S_t\bx}^TM(S_t\bx) \bphi_0(t)\right]
&=&\bphi_1(t)^TP_{S_t\bx}^TLM(S_t\bx)\bphi_0(t),\label{4.3} \\
\frac{d}{dt}\left[\bphi_0(t)^TM(S_t\bx)P_{S_t\bx} \bphi_1(t)\right]
&=&\bphi_0(t)^TLM(S_t\bx)P_{S_t\bx}\bphi_1(t).\label{4.4}
\end{eqnarray}
\end{lemma}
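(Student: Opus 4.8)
The core of this lemma is a differentiation identity, and the strategy is to compute the left-hand side of each equation via the product rule and then verify that all the ``extra'' terms beyond the claimed right-hand side cancel. The essential observation is that $LM(\bx)$, defined in \eqref{LM}, is exactly the combination $M'(\bx) + D\bff(\bx)^T M(\bx) + M(\bx)D\bff(\bx)$ (this is what the orbital derivative of $M$ together with the first variation equation would naively produce) \emph{minus} two correction terms involving $\bff(\bx)\bff(\bx)^T$; those correction terms are precisely what is needed to absorb the derivatives falling on the projections $P_{S_t\bx}$.

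For \eqref{4.1}, I would start from $\frac{d}{dt}\big[\bphi_1^T P_{S_t\bx}^T M(S_t\bx) P_{S_t\bx}\bphi_2\big]$ and expand by the product rule into five pieces: two from differentiating $\bphi_1,\bphi_2$ (which, using $\dot{\bphi}_i = D\bff(S_t\bx)\bphi_i$, produce $\bphi_1^T D\bff(S_t\bx)^T P^T M P\bphi_2$ and $\bphi_1^T P^T M P\, D\bff(S_t\bx)\bphi_2$), one from differentiating $M(S_t\bx)$ along the flow (which produces $\bphi_1^T P^T M'(S_t\bx) P\bphi_2$, since $\frac{d}{dt}M(S_t\bx) = M'(S_t\bx)$ by definition of the orbital derivative), and two from differentiating the projections. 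For the last two I would use the formula for $\frac{d}{dt}P_{S_t\bx}$ already computed in the proof of Lemma~\ref{help} (the display just before \eqref{psi-r}): when contracted against $P^T$ on the appropriate side, the $P_{S_t\bx}D\bff(S_t\bx)$ and $D\bff(S_t\bx)^T P_{S_t\bx}$ terms get modified, and the net effect is to replace $D\bff$ by $D\bff - \tfrac{\bff\bff^T}{\|\bff\|^2}(D\bff^T+D\bff)$ (resp. its transpose) on the two sides. Collecting everything and comparing with $P^T LM(S_t\bx) P$ shows the identity. The key algebraic simplification throughout is that $P_{S_t\bx}\bff(S_t\bx) = \bnull$, so terms of the form $P^T(\,\cdot\,)\bff\bff^T P$ vanish, which is why only the ``symmetric'' correction terms in \eqref{LM} survive.

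For \eqref{4.2}, I would note that $\bphi_0(t) = \bff(S_t\bx)/\|\bff(S_t\bx)\|^2$ is itself a solution of the modified equation $\dot{\bphi}_0 = \tA(t,\bx)\bphi_0$ with $\tA = D\bff - \tfrac{\bff\bff^T}{\|\bff\|^2}(D\bff^T + D\bff)$ (this is stated in Step~5 of the proof of Theorem~\ref{main}), or alternatively compute directly using $(\bff(S_t\bx))' = D\bff(S_t\bx)\bff(S_t\bx)$ and the quotient rule. Then differentiating $\bphi_0^T M(S_t\bx)\bphi_0$ gives the $M'$ term plus $\bphi_0^T(\tA^T M + M\tA)\bphi_0$, and since $\tA^T M + M\tA = D\bff^T M + M D\bff - \big(\text{the two }\bff\bff^T\text{ correction terms}\big)$ exactly matches $LM - M'$, we are done. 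Equations \eqref{4.3} and \eqref{4.4} are the ``mixed'' versions and follow by the same bookkeeping, combining the calculation of \eqref{4.1} on one side with that of \eqref{4.2} on the other; note \eqref{4.4} is just the transpose of \eqref{4.3} since $M$ is symmetric.

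\textbf{Main obstacle.} The only real work is the careful bookkeeping of the projection-derivative terms in \eqref{4.1}: one must verify that, after contracting $\frac{d}{dt}P_{S_t\bx}$ with $P^T M P$ (and the symmetric term), precisely the two correction terms appearing in \eqref{LM} are reproduced and nothing else. Using $P_{S_t\bx}\bff(S_t\bx)=\bnull$ repeatedly kills the ``rank-one times rank-one'' cross terms, but keeping track of which of the many $\bff\bff^T$-type terms survives and which cancels is where a sign error would most easily creep in. Once \eqref{4.1} and \eqref{4.2} are established cleanly, \eqref{4.3} and \eqref{4.4} are immediate.
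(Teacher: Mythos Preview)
Your proposal is correct and follows essentially the same approach as the paper: a direct product-rule differentiation of each quadratic/bilinear form, using $\dot{\bphi}_i=D\bff(S_t\bx)\bphi_i$, the orbital derivative $\frac{d}{dt}M(S_t\bx)=M'(S_t\bx)$, the explicit formula for $\frac{d}{dt}P_{S_t\bx}$, and the identity $P_{S_t\bx}\bff(S_t\bx)=\bnull$ to cancel the cross terms; the paper likewise writes out the expansion for \eqref{4.1} and \eqref{4.2} and defers the details to ``a straight-forward calculation,'' with \eqref{4.3}--\eqref{4.4} handled analogously. Your observation that \eqref{4.4} is the transpose of \eqref{4.3} (using $M=M^T$ and $LM=(LM)^T$) is a small economy the paper does not make explicit.
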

\begin{proof}Note that $\bff(S_t\bx)\not=\bnull$ for all $t\ge 0$ since $\bff(\bx)\not=\bnull$.

	For the first statement we calculate
\begin{eqnarray*}\lefteqn{
\frac{d}{dt}\left[\bphi_1(t)^TP_{S_t\bx}^TM(S_t\bx)P_{S_t\bx}\bphi_2(t)\right]}\\
&=&\bphi_1(t)^TD\bff(S_t\bx)^T\left(I-\frac{\bff(S_t\bx)\bff(S_t\bx)^T}{\|\bff(S_t\bx)\|^2}\right)M(S_t\bx)P_{S_t\bx}\bphi_2(t)\\
&&-	\bphi_1(t)^T\frac{D\bff(S_t\bx)\bff(S_t\bx)\bff(S_t\bx)^T+\bff(S_t\bx)\bff(S_t\bx)^TD\bff(S_t \bx)^T}{\|\bff(S_t\bx)\|^2}M(S_t\bx)P_{S_t\bx}\bphi_2(t)\\
	&&+\bphi_1(t)^T\frac{\bff(S_t\bx)\bff(S_t\bx)^T(D\bff(S_t\bx)+D\bff(S_t\bx)^T)\bff(S_t\bx)\bff(S_t\bx)^T}{\|\bff(S_t\bx)\|^4}M(S_t\bx)P_{S_t\bx}\bphi_2(t)\\
&&+\bphi_1(t)^TP_{S_t\bx}^TM'(S_t\bx)P_{S_t\bx}\bphi_2(t)\\	
&&-	\bphi_1(t)^TP_{S_t\bx}^TM(S_t\bx)\frac{D\bff(S_t \bx)\bff(S_t\bx)\bff(S_t\bx)^T+\bff(S_t\bx)\bff(S_t\bx)^TD\bff(S_t\bx)^T}{\|\bff(S_t\bx)\|^2}\bphi_2(t)\\
&&+\bphi_1(t)^TP_{S_t\bx}^TM(S_t\bx)
\frac{\bff(S_t\bx)\bff(S_t\bx)^T(D\bff(S_t\bx)+D\bff(S_t\bx)^T)\bff(S_t\bx)\bff(S_t\bx)^T}{\|\bff(S_t\bx)\|^4}\bphi_2(t)\\
&&+\bphi_1(t)^TP_{S_t\bx}^TM(S_t\bx)\left(I-\frac{\bff(S_t\bx)\bff(S_t\bx)^T}{\|\bff(S_t\bx)\|^2}\right)D\bff(S_t\bx)\bphi_2(t)\\
&=&\bphi_1(t)^T\left(I-\frac{\bff(S_t\bx)\bff(S_t\bx)^T}{\|\bff(S_t\bx)\|^2}\right)
\bigg[M'(S_t\bx)+D\bff(S_t\bx)^TM(S_t\bx)+M(S_t\bx)D\bff(S_t\bx)\\
&&
-\frac{M(S_t\bx)\bff(S_t\bx)\bff(S_t\bx)^T(D\bff(S_t\bx)+D\bff(S_t\bx)^T)}{\|\bff(S_t\bx)\|^2}
\\
&&
-\frac{(D\bff(S_t\bx)+D\bff(S_t\bx)^T)\bff(S_t\bx)\bff(S_t\bx)^T M(S_t\bx)}{\|\bff(S_t\bx)\|^2}
\bigg]
\left(I-\frac{\bff(S_t\bx)\bff(S_t\bx)^T}{\|\bff(S_t\bx)\|^2}\right)\bphi_2(t)
\end{eqnarray*}
which can be verified in a straight-forward calculation.

For the second statement we calculate
\begin{eqnarray*}\lefteqn{\frac{d}{dt}\left[\frac{\bff(S_t\bx)^T}{\|\bff(S_t\bx)\|^2} M(S_t\bx) \frac{\bff(S_t\bx)}{\|\bff(S_t\bx)\|^2}\right]}\\
&=&\frac{\bff(S_t\bx)^TD\bff(S_t\bx)^T}{\|\bff(S_t\bx)\|^2} M(S_t\bx) \frac{\bff(S_t\bx)}{\|\bff(S_t\bx)\|^2}\\
&&-\frac{\bff(S_t\bx)^T(D\bff(S_t\bx)+D\bff(S_t\bx)^T)\bff(S_t\bx)\bff(S_t\bx)^T}{\|\bff(S_t\bx)\|^4} M(S_t\bx) \frac{\bff(S_t\bx)}{\|\bff(S_t\bx)\|^2}\\
&&+\frac{\bff(S_t\bx)^T}{\|\bff(S_t\bx)\|^2} M'(S_t\bx) \frac{\bff(S_t\bx)}{\|\bff(S_t\bx)\|^2}\\
&&+\frac{\bff(S_t\bx)^T}{\|\bff(S_t\bx)\|^2} M(S_t\bx) \frac{D\bff(S_t\bx)\bff(S_t\bx)}{\|\bff(S_t\bx)\|^2}\\
&&- \frac{\bff(S_t\bx)^T}{\|\bff(S_t\bx)\|^2}M(S_t\bx)
\frac{\bff(S_t\bx)\bff(S_t\bx)^T(D\bff(S_t\bx)+D\bff(S_t\bx)^T)\bff(S_t\bx)}{\|\bff(S_t\bx)\|^4} \\
&=&\frac{\bff(S_t\bx)^T}{\|\bff(S_t\bx)\|^2}
\bigg[M'(S_t\bx)+D\bff(S_t\bx)^TM(S_t\bx)+M(S_t\bx)D\bff(S_t\bx)\\
&&
-\frac{M(S_t\bx)\bff(S_t\bx)\bff(S_t\bx)^T(D\bff(S_t\bx)+D\bff(S_t\bx)^T)}	{\|\bff(S_t\bx)\|^2}\\
&&
-\frac{(D\bff(S_t\bx)+D\bff(S_t\bx)^T)\bff(S_t\bx)\bff(S_t\bx)^T M(S_t\bx)}{\|\bff(S_t\bx)\|^2}
\bigg]\frac{\bff(S_t\bx)^T}{\|\bff(S_t\bx)\|^2}
\end{eqnarray*}
which can be verified in a straight-forward calculation. The last statements can be proven in a similar way as the previous two.
\end{proof}

\begin{theorem}	\label{th:uni}	The solution $M$ of \eqref{PDE} and \eqref{initial}, see Theorem \ref{main}, is unique in $A(\Omega)$. 
	\end{theorem}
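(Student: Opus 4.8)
The plan is to subtract two solutions, show that the difference vanishes identically, and exploit three ingredients: Lemma \ref{lem} to turn the PDE into conservation laws along trajectories, the exponential decay of $P_{S_t\bx}\Phi(t,0;\bx)$ to kill the components of the difference transverse to $\bff$, and the normalisation \eqref{initial} together with a first-integral argument to kill the component along $\bff$.

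Concretely, let $M_1,M_2\in C^{\sigma-1}(A(\Omega),\S^n)$ both satisfy \eqref{PDE} and \eqref{initial}, and set $N:=M_1-M_2\in C^1(A(\Omega),\S^n)$. Since $L$ is linear in its matrix argument, $LN(\bx)=-C(\bx)+C(\bx)=\bnull$ for all $\bx\in A(\Omega)$, while $\bff(\bx_0)^TN(\bx_0)\bff(\bx_0)=0$. Fix $\bx\in A(\Omega)$; then $\bff(S_t\bx)\neq\bnull$ for all $t\ge 0$ (no point of $A(\Omega)$ can be an equilibrium, since it would have to lie on $\Omega$), and the forward orbit closure $K_\bx:=\overline{\{S_t\bx:t\ge 0\}}$ is a compact subset of $A(\Omega)$, so there are constants with $\|N(\by)\|\le N^*$ and $0<f_0\le\|\bff(\by)\|$ on $K_\bx$; in particular $\bphi_0(t):=\bff(S_t\bx)/\|\bff(S_t\bx)\|^2$ is bounded on $[0,\infty)$. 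The identities \eqref{4.1}--\eqref{4.4} of Lemma \ref{lem} remain valid with $M$ replaced by $N$ (their derivation only uses that $N$ is $C^1$ along the trajectory), and since $LN\equiv\bnull$ their right-hand sides vanish. Hence, for any solutions $\bphi_1,\bphi_2$ of $\dot\bphi=D\bff(S_t\bx)\bphi$, the four scalar functions $t\mapsto\bphi_1(t)^TP_{S_t\bx}^TN(S_t\bx)P_{S_t\bx}\bphi_2(t)$, $t\mapsto\bphi_0(t)^TN(S_t\bx)\bphi_0(t)$, $t\mapsto\bphi_1(t)^TP_{S_t\bx}^TN(S_t\bx)\bphi_0(t)$ and $t\mapsto\bphi_0(t)^TN(S_t\bx)P_{S_t\bx}\bphi_1(t)$ are constant on $[0,\infty)$.

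Next I would let $t\to\infty$ in the first, third and fourth of these. The estimate established inside the proof of Lemma \ref{3.6} is exactly Lemma \ref{help} transported from $U$ to $A(\Omega)$, giving $\|P_{S_t\bx}\Phi(t,0;\bx)\|\le C_\bx e^{-\kappa t}$; hence $\|P_{S_t\bx}\bphi_i(t)\|\to0$ exponentially, and combining this with the boundedness of $N$ and of $\bphi_0$ on $K_\bx$ shows that these three functions tend to $0$, so they are identically $0$. Evaluating at $t=0$ and letting $\bphi_1(0),\bphi_2(0)$ run through the standard basis yields $P_\bx N(\bx)P_\bx=0$ and $P_\bx N(\bx)\bff(\bx)=\bnull$ (the fourth identity is the transpose of the third via symmetry of $N$). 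For the remaining $\bff$-direction decay is unavailable, so instead I would observe that $h(\by):=\bff(\by)^TN(\by)\bff(\by)/\|\bff(\by)\|^4$ is continuous on $A(\Omega)$ and, by constancy of the second function above based at $\by$, satisfies $h(S_t\by)=h(\by)$ for all $t\ge 0$; thus $h$ is a continuous first integral. Since $\dist(S_t\by,\Omega)\to0$ and $\Omega$ is compact, there are $t_k\to\infty$ and $\bp\in\Omega$ with $S_{t_k}\by\to\bp$, whence $h(\by)=h(\bp)$; and $h$ is constant on $\Omega$ because $\Omega$ is a single trajectory. Therefore $h$ is constant on $A(\Omega)$, and \eqref{initial} forces that constant to be $0$; in particular $\bff(\bx)^TN(\bx)\bff(\bx)=0$.

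Finally I would write $\bw=P_\bx\bw+\frac{\bff(\bx)\bff(\bx)^T}{\|\bff(\bx)\|^2}\bw$ for arbitrary $\bw\in\R^n$ and expand $\bw^TN(\bx)\bw$ into the four blocks $P_\bx N(\bx)P_\bx$, $P_\bx N(\bx)\frac{\bff(\bx)\bff(\bx)^T}{\|\bff(\bx)\|^2}$, its transpose, and $\frac{\bff(\bx)\bff(\bx)^T}{\|\bff(\bx)\|^2}N(\bx)\frac{\bff(\bx)\bff(\bx)^T}{\|\bff(\bx)\|^2}$, each of which vanishes by the previous paragraph; hence $\bw^TN(\bx)\bw=0$ for all $\bw$, and since $N(\bx)$ is symmetric, $N(\bx)=0$. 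As $\bx\in A(\Omega)$ was arbitrary, $N\equiv0$, i.e. $M_1=M_2$. I expect the main obstacle to be precisely the $\bff$-direction: there is no contraction there, so it cannot be closed by the decay argument nor by any purely local reasoning, and one genuinely needs the global first-integral/$\omega$-limit argument anchored by the normalisation \eqref{initial} — which is exactly why that extra condition was built into Theorem \ref{main}.
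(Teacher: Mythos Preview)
Your proof is correct and follows essentially the same route as the paper: decompose via $P_\bx$ and $\bff(\bx)$, use Lemma~\ref{lem} to obtain constancy along trajectories, kill the transverse blocks by the exponential decay of $P_{S_t\bx}\Phi(t,0;\bx)$, and handle the $\bff$-direction by the first-integral argument anchored at $\bx_0$ via \eqref{initial}. The only (cosmetic) difference is that you apply Lemma~\ref{lem} directly to $N=M_1-M_2$ after observing that its proof never uses positive definiteness, whereas the paper applies the lemma to $M_1$ and $M_2$ separately and subtracts; your variant is slightly cleaner and makes explicit that the hypothesis ``$M(\bx)$ positive definite'' in Lemma~\ref{lem} is not actually used.
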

	\begin{proof}
	Let $M_1$ and $M_2$ be two solutions of \eqref{PDE} and \eqref{initial}. Let $\bx\in A(\Omega)	$ and let $\Phi(t,0;\bx)$ be the principal fundamental matrix solution of $\dot{\bphi}(t)=D\bff(S_t\bx)\bphi(t)$ with $\Phi(0,0;\bx)=I$. 
	We want to show that $\bu_1^T[M_1(\bx)-M_2(\bx)]\bu_2=0$ for all $\bu_1,\bu_2\in \mathbb R^n$. We write $$\bu_i=P_\bx \bu_i+c_i\frac{\bff(\bx)}{\|\bff(\bx)\|^2}$$
	with  $c_i=\bff(\bx)^T\bu_i$.
	 Then we have, denoting $\bphi_0(t)=\frac{\bff(S_t\bx)}{\|\bff(S_t\bx)\|^2}$,
	 \begin{eqnarray}
	 	\lefteqn{
\bu_1^T[M_1(\bx)-M_2(\bx)]\bu_2}\nonumber\\&=&
\bu_1^TP_\bx^T[M_1(\bx)-M_2(\bx)]P_\bx\bu_2+
c_1\bphi_0(0)^T[M_1(\bx)-M_2(\bx)]P_\bx\bu_2\nonumber\\
&&+
c_2\bu_1^TP_\bx^T[M_1(\bx)-M_2(\bx)]\bphi_0(0)
+c_1c_2 \bphi_0(0)^T[M_1(\bx)-M_2(\bx)]\bphi_0(0).\label{4terms}
\end{eqnarray}
We will show that each term in \eqref{4terms} is zero.

From \eqref{4.1} of Lemma \ref{lem} we have for $i=1,2$
\begin{eqnarray*}
\frac{d}{dt}\left[\Phi(t,0;\bx)^TP_{S_t\bx}^TM_i(S_t\bx)P_{S_t\bx}\Phi(t,0;\bx)\right]
&=&\Phi(t,0;\bx)^TP_{S_t\bx}^TLM_i(S_t\bx)P_{S_t\bx}\Phi(t,0;\bx)\\
&=&-\Phi(t,0;\bx)^TP_{S_t\bx}^TC(S_t\bx)P_{S_t\bx}\Phi(t,0;\bx).
	\end{eqnarray*}
	Hence, by subtracting the equations for $M_1-M_2$ we obtain
	\begin{eqnarray*}
		\frac{d}{dt}\left[\Phi(t,0;\bx)^TP_{S_t\bx}^T[M_1(S_t\bx)-M_2(S_t\bx)]P_{S_t\bx}\Phi(t,0;\bx)\right]&=&0
		\end{eqnarray*}
		and by integrating
		\begin{eqnarray*}
\|	P_{\bx}^T[M_1(\bx)-M_2(\bx)]P_{\bx}\|&=&		\left\|\Phi(t,0;\bx)^TP_{S_t\bx}^T[M_1(S_t\bx)-M_2(S_t\bx)]P_{S_t\bx}\Phi(t,0;\bx)\right\|\\
	&\le&	\|P_{S_t\bx}\Phi(t,0;\bx)\|^2\|M_1(S_t\bx)-M_2(S_t\bx)\|\\
	&\to&0
	\end{eqnarray*}
	as $t\to \infty$ 
	since $M_i$ are continuous, $\overline{\bigcup_{t\ge 0}S_t\bx}$ is compact and  $\|P_{S_t\bx}\Phi(t,0;\bx)\|$ is exponentially decreasing to zero by Lemma \ref{help} (note that there exists $T_0\ge 0$ such that for all $t\ge T_0$ we have $S_t\bx \in U$ as $\bx\in A(\Omega)$).
		
	Similarly, using \eqref{4.3} and \eqref{4.4}, we have $P_{\bx}^T[M_1(\bx)-M_2(\bx)]\bphi_0(0)=\bnull$ as well as $	\bphi_0(0)^T[M_1(\bx)-M_2(\bx)]P_{\bx}=\bnull^T$.
	 This shows that the first three terms of
	\eqref{4terms} are zero.
	
	For the last term of \eqref{4terms} we have for $M_i$ satisfying $LM_i(\bx)=-C(\bx)$
	by \eqref{4.2} of Lemma \ref{lem}
	\begin{eqnarray*}
	\frac{d}{dt}\left[\bphi_0(t)^T M_i(S_t\bx) \bphi_0(t)\right]
&=&\bphi_0(t)^T LM_i(S_t\bx) \bphi_0(t)\\
&=&-\bphi_0(t)^TC(S_t\bx) \bphi_0(t)\\
&=&-\bphi_0(t)^TP_{S_t\bx}^TB(S_t\bx)P_{S_t\bx} \bphi_0(t)\\
&=&0
\end{eqnarray*}
since $P_{S_t\bx} \bphi_0(t)=P_{S_t\bx}\frac{ \bff(S_t\bx)}{\|\bff(S_t\bx)\|^2}=\bnull$.
Hence, $\bphi_0(t)^T M_i(S_t\bx) \bphi_0(t)$ is constant along trajectories.
Restricting ourselves to the periodic orbit, this means in particular that there are constants $C_1, C_2$ such that $\frac{\bff(\bp)^T}{\|\bff(\bp)\|^2}M_i(\bp)\frac{\bff(\bp)}{\|\bff(\bp)\|^2}=C_i$ for all $\bp\in \Omega$, $i=1,2$.

Since $\dist(S_t\bx,\Omega)\to 0$ as $t\to \infty$, we have for a general $\bx\in A(\Omega)$
\begin{eqnarray*}
	\frac{\bff(\bx)^T}{\|\bff(\bx)\|^2}M_i(\bx)\frac{\bff(\bx)}{\|\bff(\bx)\|^2}&=&	\frac{\bff(S_t\bx)^T}{\|\bff(S_t\bx)\|^2}M_i(S_t\bx)\frac{\bff(S_t\bx)}{\|\bff(S_t\bx)\|^2}\ \longrightarrow \ C_i
	\end{eqnarray*}
	as $t\to \infty$.	By \eqref{initial}, for $\bx=\bx_0$ we have for $i=1,2$
	\begin{eqnarray*}
C_i&=&			\frac{\bff(\bx_0)^T}{\|\bff(\bx_0)\|^2}M_i(\bx_0)\frac{\bff(\bx_0)}{\|\bff(\bx_0)\|^2}
\ =\ c_0
\end{eqnarray*}
and thus $C_1=C_2$. This means that
$$\bphi_0(\bx)^T[M_1(\bx)-M_2(\bx)]\bphi_0(\bx)=C_1-C_2=0$$
for all $\bx\in A(\Omega)$ and shows that the last term in \eqref{4terms} is zero.
	\end{proof}
	
\section{Conclusions}
\label{con}

In this paper we have presented a matrix-valued PDE with a given value at one point; we have shown existence and uniqueness of a solution, we have established that the solution is of a specific form and that it is a positive definite matrix at each point.

In particular, this shows that the solution is a contraction metric, which implies the existence, uniqueness and exponential stability of a periodic orbit, and determines its basin of attraction. We have thus shown a converse theorem on the existence of a contraction metric for periodic orbits.

By characterizing the contraction metric as solution of a PDE, numerical methods can now be employed for its explicit construction. For example, mesh-free collocation can be used to solve this linear matrix-valued PDE \cite{giesl-wendland}, and error estimates are available. Since even an approximation to the solution of the PDE is a contraction metric, this allows for the explicit construction of a contraction metric.

\begin{appendix}

\section{Gronwall}
\label{Gronwall}

We cite the following lemma from  \cite[Lemma D.2]{sell-you}.
\begin{lemma}\label{gronwall}
 Let $r,K,a\in L_{loc}^1([0,\infty),\mathbb R)$ be nonnegative functions  and let $b\in L^\infty_{loc}([0,\infty),\mathbb R)$ be a continuous nonnegative function such that
$$r(\theta)\le a(\theta)+K(\theta)\int_0^\theta b(s)r(s)\,ds $$
 holds for almost all $\theta\ge 0$.

Then
$$r(\theta)\le a(\theta)+K(\theta)\int_0^\theta a(s) b(s)\,ds\cdot \exp\left(\int_0^\theta K(s)b(s)\,ds\right)$$
holds for almost all $\theta\ge 0$.
\end{lemma}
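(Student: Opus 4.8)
The plan is to reduce the given integral inequality to a linear differential inequality for the accumulated quantity
$$v(\theta):=\int_0^\theta b(s)r(s)\,ds,$$
and then to resolve it by the integrating-factor method, taking care to work in the almost-everywhere and locally absolutely continuous category, since $r,K,a$ are only assumed to lie in $L^1_{loc}$.

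First I would note that $br\in L^1_{loc}([0,\infty),\mathbb R)$, so $v$ is locally absolutely continuous with $v(0)=0$ and $v'(\theta)=b(\theta)r(\theta)$ for almost every $\theta\ge 0$. Multiplying the hypothesis $r(\theta)\le a(\theta)+K(\theta)v(\theta)$ by $b(\theta)\ge 0$ gives, for almost every $\theta\ge 0$,
$$v'(\theta)\le a(\theta)b(\theta)+b(\theta)K(\theta)\,v(\theta).$$
Next I would set $B(\theta):=\int_0^\theta b(s)K(s)\,ds$, which is locally absolutely continuous, nonnegative, and satisfies $B'(\theta)=b(\theta)K(\theta)$ almost everywhere, and introduce $w(\theta):=e^{-B(\theta)}v(\theta)$. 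Since $e^{-B}$ is locally absolutely continuous (a function that is Lipschitz on bounded sets composed with a locally absolutely continuous one) and $v$ is locally absolutely continuous, so is $w$, and for almost every $\theta\ge0$
$$w'(\theta)=e^{-B(\theta)}\bigl(v'(\theta)-B'(\theta)v(\theta)\bigr)\le e^{-B(\theta)}a(\theta)b(\theta)\le a(\theta)b(\theta),$$
using the previous display together with $e^{-B(\theta)}\le 1$, which holds because $B\ge 0$.

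Then I would integrate from $0$ to $\theta$, using $w(0)=e^{-B(0)}v(0)=0$, to obtain $w(\theta)\le\int_0^\theta a(s)b(s)\,ds$ for all $\theta\ge 0$, hence
$$v(\theta)=e^{B(\theta)}w(\theta)\le \exp\Bigl(\int_0^\theta K(s)b(s)\,ds\Bigr)\int_0^\theta a(s)b(s)\,ds.$$
Substituting this bound for $v(\theta)$ back into $r(\theta)\le a(\theta)+K(\theta)v(\theta)$ yields the asserted estimate for almost every $\theta\ge 0$.

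The only delicate part — and hence the step I would be most careful about — is the measure-theoretic bookkeeping: checking that $v$, $B$, and $w$ are genuinely locally absolutely continuous so that the fundamental theorem of calculus for the Lebesgue integral applies and the product and chain rules hold almost everywhere, and that the several "almost every $\theta$" exceptional sets can be combined into a single null set. Beyond that there is no analytic obstacle; alternatively, one may simply invoke \cite[Lemma D.2]{sell-you} verbatim, as the statement is a standard generalized Grönwall inequality.
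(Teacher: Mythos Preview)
Your proof is correct: the integrating-factor reduction via $v(\theta)=\int_0^\theta b(s)r(s)\,ds$ and $w(\theta)=e^{-B(\theta)}v(\theta)$ is the standard route, and your care with local absolute continuity (so that the product rule and the fundamental theorem of calculus apply a.e.) is exactly what is needed under the stated $L^1_{loc}/L^\infty_{loc}$ hypotheses.

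As for comparison with the paper: there is nothing to compare against, since the paper does not prove this lemma at all but simply cites it from \cite[Lemma~D.2]{sell-you}. Your last sentence already anticipates this. In effect you have supplied the argument that the paper outsources; the approach you give is presumably the one in the cited reference, so there is no methodological divergence to discuss.
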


\section{Proof of Theorem \ref{1.1}}
\label{proof}

In this section we give a sketch of the proof of Theorem \ref{1.1}. It is very similar to the proof of \cite[Theorem 5]{giesl04}, which considers adjacent solutions in direction $\bv$ with $\bv^TM(\bx)\bff(\bx)=0$ while we consider $\bv$ with  $\bv^T\bff(\bx)=0$. Note that a similar result as in Theorem \ref{1.1} with $M(\bx)=I$, so the Euclidean metric, has been proven in \cite{leonov1}.

We now use the notations as in  \cite[Theorem 5]{giesl04} and only highlight the necessary changes; all references are with respect to the proof of  \cite[Theorem 5]{giesl04}.
In \cite[Proposition 7]{giesl04}, which defines the time synchronization $T_p^{p+\eta}$ of the solutions $S_\theta p $ and $S_{T_p^{p+\eta}(\theta)}(p+\eta)$, we replace the first equation by
$$\left(S_{T_p^{p+\eta}(\theta)}(p+\eta)-S_\theta p \right)^Tf(S_\theta p)=0.$$

In the proof we replace
(12) by 
		\begin{eqnarray}
			\|Df(p)-Df(p+\xi)\|\le C_1:=\frac{\lambda_m}{\lambda_M \left(1+2\frac{f_M^2}{f_m^2}\right)}
			\frac{k\nu}{2}\label{B.1}
			\end{eqnarray}
and (14) by
\begin{eqnarray}
\delta'&:=&\min\left(\delta_1,\frac{\sqrt{\lambda_m} }{f_M f_D}\frac{f_m^2}{2},
\frac{\lambda_m^{3/2}f_m^2}{4\lambda_Mf_Mf_D^2\left(1+\frac{f_M^2}{f_m^2}\right)} \frac{k\nu}{2}
 \right);\label{delta}
\end{eqnarray}
 see \cite{giesl04} for the definition of the constants.
(16) is replaced by
$$Q(T,\theta,\eta)=(S_T(p+\eta)-S_{\theta} p)^Tf(S_\theta p)=0.$$

We define
$$A(\theta):=\sqrt{(S_{T(\theta)}(p+\eta)-S_\theta p)^T M(S_\theta p)(S_{T(\theta)}(p+\eta)-S_\theta p)}$$
and define $v(\theta)$ by
$$A(\theta)v(\theta )=S_{T(\theta)}(p+\eta)-S_\theta p.$$
We replace (18) and (19) by
\begin{eqnarray*}
	\partial_\theta Q(T,\theta,\eta)&=&-\|f(S_\theta p)\|^2+A(\theta)v(\theta)^TDf(S_\theta p)f(S_\theta p)\\
	\partial_T Q(T,\theta,\eta)&=&f(S_T(p+\eta))f(S_\theta p)\\
	&=&\|f(S_\theta p)\|^2+A(\theta)\left(\int_0^1 Df(S_\theta p+\lambda A(\theta)v(\theta))\,d\lambda v(\theta)\right)^T f(S_\theta p)\\
	\dot{T}(\theta)&=&1-A(\theta)\frac{ \left(\left(\int_0^1 Df(S_\theta p+\lambda A(\theta)v(\theta))\,d\lambda v(\theta)\right )^T+v(\theta)^TDf(S_\theta p)\right)f(S_\theta p)
	}{\|f(S_\theta p)\|^2+A(\theta)\left(\int_0^1 Df(S_\theta p+\lambda A(\theta)v(\theta))\,d\lambda v(\theta)\right)^T f(S_\theta p)}
\end{eqnarray*}

The first equation in part III becomes thus
\begin{eqnarray*}
	\frac{d}{d\theta}A^2(\theta)&=&(S_{T(\theta)}(p+\eta)-S_\theta p)^TM'(S_\theta p)(S_{T(\theta)}(p+\eta)-S_\theta p)\\
&&+	2(S_{T(\theta)}(p+\eta)-S_\theta p)^TM(S_\theta p)\left(f(S_{T(\theta)}(p+\eta))\dot{T}(\theta)-f(S_\theta p)\right)\\
	&=&(S_{T(\theta)}(p+\eta)-S_\theta p)^TM'(S_\theta p)(S_{T(\theta)}(p+\eta)-S_\theta p)\\
	&&+	2(S_{T(\theta)}(p+\eta)-S_\theta p)^TM(S_\theta p)\cdot\bigg([f(S_{T(\theta)}(p+\eta))-f(S_\theta p)]\\
	&&+(\dot{T}(\theta)-1)f(S_\theta p)
	+(\dot{T}(\theta)-1)[f(S_{T(\theta)}(p+\eta))-f(S_\theta p)]\bigg)\\
	&=&
	A^2(\theta)v(\theta)^TM'(S_\theta p)v(\theta)\\	
	&&+2A^2(\theta) v(\theta)^TM(S_\theta p)\int_0^1 Df(S_\theta p+\lambda A(\theta )v(\theta))\,d\lambda v(\theta)\\
		&&-2A^2(\theta) v(\theta)^TM(S_\theta p)f(S_\theta p)\cdot\\
&&		\frac{ \left(\left(\int_0^1 Df(S_\theta p+\lambda A(\theta)v(\theta))\,d\lambda v(\theta)\right )^T+v(\theta)^TDf(S_\theta p)\right)f(S_\theta p)
		}{\|f(S_\theta p)\|^2+A(\theta)\left(\int_0^1 Df(S_\theta p+\lambda A(\theta)v(\theta))\,d\lambda v(\theta)\right)^T f(S_\theta p)}\\
		&&-2A^3(\theta) v(\theta)^TM(S_\theta p)\int_0^1 Df(S_\theta p+\lambda A(\theta )v(\theta))\,d\lambda v(\theta)\\
		&&\cdot
\frac{ \left(\left(\int_0^1 Df(S_\theta p+\lambda A(\theta)v(\theta))\,d\lambda v(\theta)\right )^T+v(\theta)^TDf(S_\theta p)\right)f(S_\theta p)
}{\|f(S_\theta p)\|^2+A(\theta)\left(\int_0^1 Df(S_\theta p+\lambda A(\theta)v(\theta))\,d\lambda v(\theta)\right)^T f(S_\theta p)}.
\end{eqnarray*}
To obtain a bound on the denominator of the last terms we use  $A(\theta)\frac{f_M f_D}{\sqrt{\lambda_m}}\le \frac{1}{2}f_m^2$ by \eqref{delta}. Also, using $\int_0^1 Df(S_\theta p+\lambda A(\theta)v(\theta))\,d\lambda=Df(S_\theta p )+\int_0^1 [Df(S_\theta p+\lambda A(\theta)v(\theta))-Df(S_\theta p)]\,d\lambda$, we obtain
with \eqref{B.1} and $\|v(\theta)\|\le \frac{1}{\sqrt{\lambda_m}}$
\begin{eqnarray*}
	\frac{d}{d\theta}A^2(\theta)&\le&A^2(\theta)v(\theta)^TM'(S_\theta p)v(\theta)\\
	&&			+2	A^2(\theta)v(\theta)^TM(S_\theta p)Df(S_\theta p)v(\theta)
	+2A^2(\theta)C_1\frac{\lambda_M}{\lambda_m}\\
	&&		-2	A^2(\theta)v(\theta)^TM(S_\theta p)f(S_\theta p)\cdot\\
	&&
					\frac{ v(\theta )^T[Df(S_\theta p)^T+Df(S_\theta p)] f(S_\theta p)		}{\|f(S_\theta p)\|^2+A(\theta)\left(\int_0^1 Df(S_\theta p+\lambda A(\theta)v(\theta))\,d\lambda v(\theta)\right)^T f(S_\theta p)}\\
&&+4A^2(\theta)\frac{\lambda_Mf_M^2C_1}{f_m^2\lambda_m}
+8A^3(\theta)\frac{\lambda_Mf_D^2f_M}{f_m^2\lambda_m^{3/2}}.
\end{eqnarray*}

%
%

Note that $\frac{1}{b+c}=\frac{1}{b}-\frac{c }{b(b+c)}$ holds for all $b,b+c>0$. Using this with
$b=\|f(S_\theta p)\|^2$ and $c=A(\theta)\left(\int_0^1 Df(S_\theta p+\lambda A(\theta)v(\theta))\,d\lambda v(\theta)\right)^T f(S_\theta p)$,
 we have
\begin{eqnarray*}
	\frac{d}{d\theta}A^2(\theta)&\le&2A^2(\theta)L_M(S_\theta p)\\
	&&
	+2A^2(\theta)C_1\frac{\lambda_M}{\lambda_m}+8A^3(\theta)\frac{\lambda_Mf_D^2f_M^3}{f_m^4\lambda_m^{3/2}}\\
	&&+4A^2(\theta)\frac{\lambda_Mf_M^2C_1}{f_m^2\lambda_m}
+8A^3(\theta)\frac{\lambda_Mf_D^2f_M}{f_m^2\lambda_m^{3/2}}\\
%
%
	&\le&-2A^2(\theta)\nu
	+2A^2(\theta)C_1\frac{\lambda_M}{\lambda_m}\left(1+2\frac{f_M^2}{f_m^2}\right)\\
	&&
	+8A^3(\theta )\frac{\lambda_M}{\lambda_m^{3/2}}\frac{f_Mf_D^2}{f_m^2}
	\left(1+\frac{f_M^2}{f_m^2}\right)\\
	&\le&2A^2(\theta)\left[-\nu+\frac{k\nu}{2}+\frac{k\nu}{2}\right]\\
	&=&-2(1-k)\nu A^2(\theta)
\end{eqnarray*}	using \eqref{B.1} and $4A(\theta)\frac{\lambda_M}{\lambda_m^{3/2}}\frac{f_Mf_D^2}{f_m^2}
\left(1+\frac{f_M^2}{f_m^2}\right)\le \frac{k\nu}{2}$ because of \eqref{delta}.		The rest of the proof is as in \cite{giesl04}.
			
\end{appendix}

\section*{References}

\bibliography{mybibfile}
\end{document}